\documentclass{article}

\usepackage{graphicx} 
\usepackage{cite}
\usepackage{authblk}
\usepackage{amsthm}
\usepackage{amsmath}
\usepackage{amssymb}
\usepackage{esint}
\usepackage[left=3cm, right=3cm, lines=45, top=1.0in, bottom=1.0in]{geometry}
\usepackage[colorlinks=true]{hyperref}

\title{\textbf{Lower Bound of Nodal Sets in Elliptic Homogenization and Functions with Strong Maximum Principle}}
\author{Jiahuan Li, Zhichen Ying}
\date{2025}

\begin{document}

\maketitle
\newtheorem{theorem}{Theorem}[section] 
\newtheorem{definition}[theorem]{Definition} 
\newtheorem{lemma}[theorem]{Lemma}
\newtheorem{corollary}[theorem]{Corollary}
\newtheorem{example}[theorem]{Example}
\newtheorem{proposition}[theorem]{Proposition}
\newtheorem{conjecture}[theorem]{Conjecture}
\newtheorem{remark}[theorem]{Remark}

\newcommand{\keywords}[1]{\par\quad\textbf{Keywords:} #1}
\newcommand{\classification}[1]{\par\quad\textbf{AMS Classification:} #1}
\renewcommand{\thefootnote}{}

\begin{abstract}
    In this paper, we first prove a uniform lower bound of nodal volume in elliptic homogenization setting. This lower bound is far from optimal. But, we can prove a constant lower bound in dimension two. Motivated by the proof, we extend this result to more general settings. To be more specific, we prove that the nodal volume has a constant lower bound for all continuous functions with strong maximum principle. Our results work for general functions beyond solutions to elliptic PDEs.
\end{abstract}

\keywords{Nodal sets, elliptic homogenization}

\numberwithin{equation}{section}

\section{Introduction}

The analysis of nodal sets in elliptic PDEs is a challenging problem. In the work of Lin \cite{MR1090434}, it has been proved that the nodal sets of elliptic PDEs have Hausdorff measure less than or equal to $n-1$. Then, it is natural to ask whether the nodal volume has upper bound and lower bound locally and globally. When considering the Laplace eigenfunction on Riemannian manifold, Yau made the following conjecture:
\begin{conjecture}[\cite{MR645728}]\label{Yau}
    Let $(M^{n},g)$ be a closed and compact manifold and $u$ be a nonconstant eigenfunction that satisfies $\Delta_{g} u+\lambda u=0$ with $\lambda>0$. Then
    \begin{equation}
        C^{-1}(M,g)\sqrt{\lambda}\leq \mathcal{H}^{n-1}(Z(u))\leq  C(M,g)\sqrt{\lambda}.
    \end{equation}
\end{conjecture}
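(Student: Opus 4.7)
The plan is to separate the two inequalities and attack each via the frequency/doubling-index machinery that has become standard for nodal set estimates of second-order elliptic eigenfunctions. Recall that Almgren's frequency $N(x,r) = r\int_{B_r(x)} |\nabla u|^2 / \int_{\partial B_r(x)} u^2$, or equivalently the doubling exponent $\beta(x,r) = \log_2 \bigl( \sup_{B_{2r}(x)} |u| / \sup_{B_r(x)} |u| \bigr)$, is almost monotone in $r$ and controls local behavior. For an eigenfunction with $\Delta_g u + \lambda u = 0$ on a scale $r \ll \lambda^{-1/2}$, $u$ is essentially harmonic, and a comparison with a covering of $M$ by balls of radius $\sim \lambda^{-1/2}$ should show that the average doubling index on this scale is bounded by $C\sqrt{\lambda}$. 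This is the quantity that should govern both inequalities.

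For the lower bound, I would first fix a Wolff-type covering of $M$ by geodesic balls of radius $r \sim \lambda^{-1/2}$. On each such ball on which $u$ has bounded doubling, a Bers-type asymptotic expansion at any zero, combined with the fact that the first nontrivial Taylor polynomial vanishes on a set of $(n-1)$-dimensional measure at least $cr^{n-1}$, produces a definite amount of nodal volume. The remaining task is to count: one must show that a positive fraction of such balls contain a zero of $u$. This follows from a mean-value/oscillation argument based on $\int_M u\, d\mathrm{vol}_g = 0$ (true when $\lambda > 0$ since $u$ is orthogonal to constants). Summing the contributions over $\sim \lambda^{n/2}$ balls yields $\mathcal{H}^{n-1}(Z(u)) \geq C^{-1}\sqrt{\lambda}$.

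For the upper bound, the strategy is more delicate and is where I expect the real obstacle to lie. The natural approach is Donnelly--Fefferman type: cover $M$ by cubes of side $r \sim \lambda^{-1/2}$, and on each cube bound $\mathcal{H}^{n-1}(Z(u))$ in terms of the doubling index $\beta$ via an integral-geometric identity (Crofton's formula reduces the problem to counting zeros on line segments, which in turn is controlled by $\beta$). Summing over the covering gives $\mathcal{H}^{n-1}(Z(u)) \leq C \cdot r^{n-1} \cdot r^{-n} \cdot \beta_{\mathrm{avg}} \leq C \sqrt{\lambda}$ \emph{provided} the doubling index stays uniformly bounded at the scale $r$.

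The main obstacle is precisely this last step in arbitrary smooth metrics: without real analyticity, the doubling index can spike in some subcollection of cubes, and the best combinatorial bound currently available (in the spirit of Logunov's hyperplane/simplex propagation lemma) only yields $\mathcal{H}^{n-1}(Z(u)) \leq C \lambda^{\alpha}$ with some $\alpha > 1/2$. In the real-analytic category the doubling index can be controlled uniformly through polynomial approximation, which is why Donnelly--Fefferman succeed, and in dimension two a direct topological argument on the number of nodal domains circumvents the propagation problem. In higher dimensions and smooth metrics, closing the gap to the sharp exponent $\tfrac12$ is exactly the open part of the conjecture, and any proof must replace the current polynomial loss in the combinatorial cube-splitting argument with a tight estimate.
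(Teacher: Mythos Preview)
The statement you are attempting to prove is labelled \emph{Conjecture} in the paper, not Theorem, and the paper offers no proof of it whatsoever; it is quoted only as historical motivation. So there is nothing in the paper to compare your proposal against.

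Your proposal is, in fact, not a proof either, and you acknowledge as much in the final paragraph: you correctly identify that the sharp upper bound $\mathcal{H}^{n-1}(Z(u)) \le C(M,g)\sqrt{\lambda}$ for smooth (non-analytic) metrics in dimension $n\ge 3$ is precisely the open part of Yau's conjecture. What you have written is an accurate high-level survey of the known strategies (frequency/doubling, wavelength-scale covering, Donnelly--Fefferman for the analytic case, Logunov's combinatorial propagation for the smooth lower bound and polynomial upper bound), together with a correct diagnosis of where the argument currently breaks down. That is valuable as an orientation, but it is not a proof proposal in the sense of a plan that could be executed; the ``remaining task'' you flag --- replacing the polynomial loss in the cube-splitting combinatorics with a sharp estimate --- is the entire unresolved problem.

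One minor correction to your lower-bound sketch: the argument that a positive fraction of wavelength-scale balls contain a zero is not really a consequence of $\int_M u = 0$ alone; the standard route is the Courant-type fact that $u$ must change sign on every ball of radius $\sim C\lambda^{-1/2}$ (since the first Dirichlet eigenvalue of such a ball exceeds $\lambda$), which forces a zero in \emph{every} such ball, not merely a positive fraction.
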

For the lower bound, it can be reduced to Nadirashvili's conjecture. It was conjectured in \cite{nadirashvilli1997geometry} that, if a harmonic function vanishes at origin, then the Hausdorff measure of nodal sets has a constant lower bound in unit ball. This conjecture has been solved by Logunov in \cite{MR3739232}. Another version of Nadirashvili's conjecture was solved by the authors and Wang in \cite{li2025nadirashviliconjectureellipticpdes}. Actually, in \cite{MR4814921}, Logunov, Lakshmi Priya and Sartori proved a much stronger version. They proved an almost sharp lower bound of nodal volume of elliptic PDEs. We will recall the main theorem here:
\begin{theorem} \label{almost sharp lower bound}
    Let $B\subset\mathbb{R}^{n}$ be a unit ball and $n\geq 3$. For every $\varepsilon>0$, there exists a constant $c$ depending on dimension $n$ and $\varepsilon$ such that for every harmonic function $u : 4B\rightarrow \mathbb{R}$ with $u(0)=0$, we have
    \begin{equation}
        \mathcal{H}^{n-1}(\{u=0\}\cap 2B)\geq cN_u(0,\frac{1}{2})^{1-\varepsilon}
    \end{equation}
    Here $N_u(x,r)$ is the doubling index introduced later.
\end{theorem}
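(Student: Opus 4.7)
The plan is to combine Logunov's doubling-index combinatorics with the constant lower bound obtained from the resolution of Nadirashvili's conjecture. Set $N = N_u(0,\tfrac{1}{2})$ and fix $\varepsilon > 0$. The guiding heuristic is that a harmonic function with doubling index $N$ locally resembles a harmonic polynomial of degree $\sim N$, whose $(n-1)$-dimensional nodal volume in $2B$ is of order $N$; the loss $N^{-\varepsilon}$ then measures the unavoidable slack in turning this heuristic into a rigorous lower bound via a decomposition argument.

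First, I would partition $2B$ into a grid of equal cubes of side length $r$, where $r^{-1}$ is an appropriate (small) power of $N$ depending on $\varepsilon$. Classify each cube $q$ as \emph{tame} if $N_u(q) \le C_0(n,\varepsilon)$ and \emph{wild} otherwise. The constant $C_0$ is fixed so that in every tame cube meeting $\{u=0\}$, the scale-invariant form of Logunov's resolution of Nadirashvili's conjecture applies and yields
\begin{equation*}
    \mathcal{H}^{n-1}\bigl(\{u=0\}\cap q\bigr) \ge c_1 r^{n-1}.
\end{equation*}

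Next, I would invoke Logunov's hyperplane (``half-cube'') lemma: when a cube is subdivided into $A^n$ equal subcubes, only $A^{n-1-c}$ of them can inherit a doubling index comparable to that of the parent. Iterating this along the refinement tree shows that wild cubes are combinatorially negligible, while tame cubes are abundant. The crucial complementary statement is that the nodal set must actually touch a large fraction of tame cubes. I would establish this by a propagation-of-smallness argument built on the doubling inequality: if $\{u=0\}$ were confined to fewer than $\gtrsim N^{1-\varepsilon}\, r^{-(n-1)}$ tame cubes, then $|u|$ would be exponentially small on a set large enough to force $N_u(0,\tfrac{1}{2})$ strictly below $N$, contradicting the hypothesis.

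Summing the local bound $c_1 r^{n-1}$ over those tame cubes that meet the zero set then yields $\mathcal{H}^{n-1}(\{u=0\}\cap 2B) \ge c\, N^{1-\varepsilon}$. The main obstacle lies precisely in this third step: one must quantify the spreading of the nodal set across tame cubes while avoiding double counting, and it is here that the almost-sharp exponent $1-\varepsilon$ (rather than $1$) originates, as a sub-polynomial error accumulates at each refinement level. The assumption $n \ge 3$ enters through the available combinatorial exponent $c$ in the hyperplane lemma and through the dimension-dependent constants in Logunov's lower bound at bounded frequency.
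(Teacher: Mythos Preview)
The paper does not prove this theorem at all: Theorem~\ref{almost sharp lower bound} is quoted from Logunov--Lakshmi~Priya--Sartori \cite{MR4814921} as a background result in the introduction (``We will recall the main theorem here''), and no argument for it appears anywhere in the present paper. So there is no ``paper's own proof'' to compare your proposal against; the paper only \emph{uses} this theorem, combining it with Theorem~\ref{uniform lower bound} to deduce the uniform lower bound stated immediately afterwards.

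That said, your sketch is a reasonable caricature of the strategy in \cite{MR4814921}: decompose into small cubes, separate bounded-doubling (``tame'') from large-doubling (``wild'') cubes via Logunov's hyperplane/combinatorial lemma, apply the Nadirashvili-type constant lower bound in each tame cube meeting the zero set, and sum. The genuinely delicate point is exactly the one you flag as ``the main obstacle'': showing that sufficiently many tame cubes are actually hit by $\{u=0\}$. Your one-sentence description (``a propagation-of-smallness argument\ldots would force $N_u(0,\tfrac12)$ strictly below $N$'') is not yet a proof; in the original paper this step is the technical core and requires a careful multiscale iteration rather than a single contradiction. As written, your proposal is a plausible outline but not a self-contained argument.
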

Indeed, this lower bound works for more genenral elliptic PDEs with $C^1$ coefficients. In their work, they made the following conjecture:
\begin{conjecture}
     Let $B\subset\mathbb{R}^{n}$ be a unit ball and $n\geq 3$. There exists a constant $c>0$ only depending on dimension $n$ such that for every harmonic function $u:4B\rightarrow \mathbb{R}$ with $u(0)=0$, we have
    \begin{equation}
        \mathcal{H}^{n-1}(\{u=0\}\cap 2B)\geq cN_u(0,\frac{1}{2})
    \end{equation}
\end{conjecture}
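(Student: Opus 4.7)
The plan is to attack the conjecture by sharpening the multiscale argument of Logunov--Lakshmi Priya--Sartori that yields Theorem~\ref{almost sharp lower bound}. Write $N := N_u(0, 1/2)$. By monotonicity of the Almgren frequency, $u$ grows at nested scales around $0$ as if it were a harmonic polynomial of degree comparable to $N$. Since the target lower bound $cN$ matches exactly the nodal volume of a harmonic polynomial of degree $N$ in a ball, the strategy is to transfer the polynomial bound to $u$ without incurring the $N^{\varepsilon}$ loss of the almost-sharp proof.

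First I would set up an integral-geometric slicing. For an invariant measure $\mu$ on affine lines $L$ crossing $2B$, the Cauchy--Crofton formula gives
\begin{equation*}
\mathcal{H}^{n-1}(\{u = 0\} \cap 2B) \;=\; c_n \int \#\bigl(\{u|_L = 0\} \cap 2B\bigr) \, d\mu(L),
\end{equation*}
so it suffices to show that a positive-measure family of lines contributes at least $\sim N$ zeros each. This reduces the problem to a one-dimensional zero count for restrictions of $u$ with controlled frequency, where the linear-in-$N$ bound is known for polynomials but must be lifted to generic real-analytic traces of harmonic functions. A natural input here is the projection/frequency transfer used in the recent work of Logunov and collaborators, which shows that for generic directions the doubling index of $u|_L$ is comparable to $N$.

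Second, to remove the $\varepsilon$-loss, I would avoid the dyadic iteration of cube subdivision that propagates the doubling index across $\log N$ scales. Instead, one would work at a single distinguished scale $r_{\ast} \sim 1/N$, where $u$ should be well-approximated by a homogeneous harmonic polynomial $P$ of degree $N$. A quantitative stability statement for zero sets under perturbation, combined with Nadirashvili-type lower bounds on the number of nodal components of $P$ inside each cube of side $r_{\ast}$, would give on the order of $N^{n-1}$ cubes that transversally intersect $\{u=0\}$; each contributes surface area $\gtrsim r_{\ast}^{n-1}$, and summation yields the desired linear bound $cN$.

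The main obstacle, and the reason this remains open, is justifying the single-scale polynomial approximation and the resulting transversality. Frequency monotonicity gives sharp $L^{\infty}$ growth between scales but not fine geometric structure of the nodal set at every scale simultaneously. The $\varepsilon$ in Theorem~\ref{almost sharp lower bound} arises precisely because the simplex lemma and cube-subdivision argument detect the presence of zeros rather than count distinct nodal components, and union bounds over $\log N$ scales accumulate multiplicative losses. A genuinely new ingredient, such as a monotonicity formula directly tailored to the nodal set or a sharper simplex lemma that counts intersections linearly in $N$, seems necessary; pinpointing and establishing such an ingredient is the crux of the conjecture.
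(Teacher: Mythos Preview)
The paper does not prove this statement: it is stated as an open \emph{conjecture} (attributed to Logunov, Lakshmi Priya and Sartori in \cite{MR4814921}), and the paper explicitly remarks immediately afterward that ``this conjecture remains open.'' There is therefore no proof in the paper against which to compare your proposal.

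Your write-up is not a proof either, and to your credit you say so: the final paragraph correctly identifies the missing ingredient---a single-scale polynomial approximation with quantitative transversality of the nodal set, or equivalently a simplex/counting lemma that is linear rather than sublinear in $N$---as the genuine obstruction. Both of your suggested routes (Cauchy--Crofton slicing and single-scale approximation at $r_\ast \sim 1/N$) are natural heuristics, but neither currently closes: for the slicing approach, controlling the doubling index of $u|_L$ by $N$ for a positive-measure set of lines does \emph{not} by itself give $\sim N$ zeros on $L$ (a one-variable real-analytic function with doubling index $N$ can have far fewer than $N$ real zeros), and for the single-scale approach the required stability of nodal components under the available $L^\infty$ approximation is exactly what is not known. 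So the proposal should be read as a roadmap of where the difficulty lies rather than a proof, which is consistent with the conjecture's open status in the paper.
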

However, this conjecture remains open. Besides the estimate of lower bound, there are also many results regarding the upper bound of nodal sets for elliptic PDEs and especially the Laplace eigenfunction. In \cite{MR3739231}, Logunov proved a polynomial upper bound of nodal sets of Laplace eigenfunction on smooth manifold. Before that, Donnelly and Fefferman proved the Yau's conjecture completely for Laplace eigenfunction on real-analytic manifold in \cite{MR943927} and \cite{MR1035413}. For more general settings, Lin proved the sharp upper bound of nodal volume for elliptic PDEs with real-analytic coefficients in \cite{MR1090434}. Hardt and Simon proved a exponential upper bound for elliptic PDEs with Lipschitz coefficient in \cite{MR1010169}. Later, Han and Lin gave a different proof in \cite{Hanlin94}. Han also used Schauder estimate to stratify the nodal sets in \cite{MR1794573}. In \cite{MR3688031}, Naber and Valtorta gave a upper bound estimate for elliptic PDEs with Lipschitz coefficients. There are also many results concerning solutions to other equations including parabolic equations, bi-harmonic functions and so on. The reader may refer \cite{MR3928756}\cite{huang2024nodal}\cite{huang2023volume} and other related references.

An important theory in elliptic PDE is the homogenization theory. There is a small parameter $\varepsilon$ in the setting. The coefficients are highly oscalliting which makes previous results fail to apply. In \cite{MR3952693}, Lin and Shen first proved a uniform upper bound of nodal volume in elliptic homogenization. They used the compactness method to proved the doubling inequality. Later, Kenig, Zhu and Zhuge gave an explicit formula of upper bound in \cite{MR4387203}. Also, this upper bound is far from optimal. But they used conformal mapping to prove a refined upper bound in dimension two. In \cite{MR4764741}, Lin and Shen also studied the upper bound of critical sets in elliptic homogenization. They used the method of Naber and Valtorta in their work \cite{MR3688031}. They also proved the upper bound of critical sets in dimension two with a different method in \cite{MR4669024}.

It is a natural question to ask whether we can prove a similar lower bound of nodal volume in homogenization. We will begin with basic settings. In the first part of our paper, we will consider the following elliptic PDE in $B(0,1)\subset \mathbb{R}^{n}$:
\begin{equation}
    \mathcal{L}_{\varepsilon}(u_{\varepsilon})=\mathrm{div}(A(x/\varepsilon)\nabla u_{\varepsilon})=0
\end{equation}
Here, the matrix $A$ satisfies the following three properties:

(1)The matrix $A$ is unifrom elliptic. It means for $\forall \xi \in \mathbb{R}^n$, there exists a constant $\lambda>0$ such that $\lambda^{-1}|\xi|^2\leq a_{ij}\xi_i\xi_j\leq \lambda|\xi|^2$

(2)The matrix $A$ is 1-periodic. It means $A(y+z)=A(y), \forall y \in \mathbb{R}^n,z\in  \mathbb{Z}^n$;

(3)The matrix $A$ is $C^1$. 

Denote all matrix satisfying above three conditions by $\mathcal{A}$. We may assume $A$ is smooth enough first, and we believe that we can consider this problem with less regular coeffecients. When considering the possible outcome, we think there are three different possible results:

(1)The lower bound is independent of $\varepsilon$ and $N$;

(2)The lower bound is independent of $\varepsilon$ but depends on $N$;

(3)The lower bound is independent of $N$ but depends on $\varepsilon$;

We do not expect to prove the first version of results. So in this paper, we try to prove a lower bound depending on the doubling index when $\varepsilon>0$ is small. Also, we hope we can get a explicit form of the lower bound for all $\varepsilon\in (0,1)$. Before that, we will first introduce the definition of doubling index.

\begin{definition}
    Let $u$ be a solution to $\mathrm{div}(A\nabla u)=0$ in $B(0,1)$, we can define the doubling index of $u$ as 
    \begin{equation}
        N(x,r):=\log_{2}\frac{\int_{B(x,r)}|u|^{2}}{\int_{B(x,r/2)}|u|^{2}}
    \end{equation}
    with $B(x,r)\subset B(0,1)$. Sometimes, we will write $N(B)=N(x,r)$ if $B=B(x,r)$. Also we will simply write $N(r)$ if $x$ is fixed or $x$ is the origin. If we want to specify the choice of function, we will write $N_{u}(x,r)$.
\end{definition}

Our first main result is the following:
\begin{theorem} \label{uniform lower bound}
    Assume $A\in \mathcal{A}$. Let $u_{\varepsilon}\in H^{1}(B(0,1))$ be a solution of $\mathcal{L}_{\varepsilon}(u_{\varepsilon})=0$ in $B(0,1)\subset \mathbb{R}^n,n\geq 3$, where $0<\varepsilon<1$. Let $N_{0}=N_{u_{\varepsilon}}(0,1)$. Then there exists a constant $\varepsilon_{0}$ depending on $A,n,N_{0}$ such that if $\ 0<\varepsilon<\varepsilon_{0}$, the following estimate holds:
    \begin{equation}
        \mathcal{H}^{n-1}(\{u_{\varepsilon}=0\}\cap B(0,1))\geq \frac{C}{N_{0}^{n-1}}
    \end{equation}
    whenever $u_{\varepsilon}(0)=0$ and $C$ only depends on $A$ and $n$.
\end{theorem}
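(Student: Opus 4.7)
My plan is to reduce to harmonic functions via quantitative homogenization, apply a known nodal lower bound there, and transfer back. The suboptimal factor $1/N_0^{n-1}$ arises because the reduction is effective only on a sub-ball of radius $\sim 1/N_0$; a constant-size nodal bound on such a sub-ball becomes $\sim (1/N_0)^{n-1}$ at the unit scale.

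First, normalize by $\|u_\varepsilon\|_{L^2(B(0,1))}=1$ and invoke a quantitative $L^\infty$ homogenization rate (such as Lin--Shen \cite{MR3952693} or Kenig--Zhu--Zhuge \cite{MR4387203}) to produce a solution $u_0$ of the constant-coefficient equation $\mathrm{div}(\hat A\nabla u_0)=0$ on $B(0, 3/4)$ with $\|u_\varepsilon - u_0\|_{L^\infty(B(0,1/2))}\le C\varepsilon^\alpha$. After the linear change of variables $y=\hat A^{1/2}x$ and subtracting the harmless constant $u_0(0)=O(\varepsilon^\alpha)$, I may regard $u_0$ as a harmonic function vanishing at the origin, with doubling index at scale $1/2$ bounded by $C(A,n)N_0$ (by $L^2$-convergence of $u_\varepsilon$ to $u_0$). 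Next I locate a sub-ball $B(x_0,r_0)\subset B(0,1/2)$ with $r_0\ge c/N_0$ on which $u_0$ vanishes at an interior point and has $L^\infty$ oscillation at least $c/N_0$; existence of such a sub-ball combines Theorem~\ref{almost sharp lower bound} (which guarantees a rich nodal set for $u_0$) with polynomial-growth estimates for harmonic functions of frequency $\le CN_0$ (ensuring that around some nodal point $u_0$ grows at least linearly on scale $1/N_0$). Choosing $\varepsilon_0 = \varepsilon_0(A,n,N_0)$ so that $C\varepsilon_0^\alpha \ll 1/N_0$, the $L^\infty$ closeness from Step~1 forces $u_\varepsilon$ to change sign in $B(x_0,r_0)$, so $Z(u_\varepsilon)\cap B(x_0,r_0)$ is non-empty. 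Finally, on this sub-ball the rescaled equation has effective period $\varepsilon/r_0 \lesssim \varepsilon N_0$, and a second application of homogenization together with Nadirashvili's constant lower bound for the harmonic approximation yields $\mathcal{H}^{n-1}(Z(u_\varepsilon)\cap B(x_0,r_0/2))\ge c r_0^{n-1}\ge C/N_0^{n-1}$, where $C$ depends only on $A$ and $n$.

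The hard part will be the localization step: producing a sub-ball of radius $\sim 1/N_0$ that \emph{simultaneously} contains a nodal point of $u_0$ and carries enough oscillation of $u_0$ to survive the homogenization error. Without such a sub-ball the $L^\infty$ closeness cannot be converted into a sign change of $u_\varepsilon$. A naive pigeonhole over Euclidean sub-balls gives controlled frequency on many candidates but not necessarily ones meeting $Z(u_0)$; conversely, centering at a nodal point can inherit high frequency (for instance if $u_0$ locally resembles $x_1^{N_0}$). Balancing these requirements---essentially a quantitative structure theorem for nodal sets of harmonic functions of frequency $\sim N_0$ at scale $1/N_0$---is the heart of the argument, and is what prevents the present method from reaching the (presumably correct) constant lower bound.
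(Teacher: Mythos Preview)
Your framework---approximate $u_\varepsilon$ by a harmonic $u_0$ via homogenization, locate sign information for $u_0$, transfer it to $u_\varepsilon$---matches the paper's. But your Step~4 has a genuine gap, and the ``hard part'' you identify is in fact bypassed entirely by a simpler mechanism.

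The gap: in Step~4 you invoke ``Nadirashvili's constant lower bound for the harmonic approximation'' on the sub-ball and conclude a lower bound for $\mathcal{H}^{n-1}(Z(u_\varepsilon))$. But Nadirashvili/Logunov bounds the nodal set of the \emph{harmonic} function; nodal measure is not stable under $L^\infty$ perturbation, so this does not transfer to $u_\varepsilon$. Getting from ``$u_\varepsilon$ is $L^\infty$-close to something with large nodal set'' to ``$u_\varepsilon$ itself has large nodal set'' is precisely the content of the theorem, so a second homogenization round does not help---you would be back where you started, one scale down.

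The paper avoids both this circularity and your localization step via Lemma~\ref{Main Lemma}, which proves directly: if $u_0$ is harmonic with $u_0(0)=0$ and $\sup$-doubling $\le N$, and $u$ is \emph{any} continuous function with $\|u-u_0\|_{L^\infty(B_4)}\le \varepsilon(n)\sup_{B_1}|u_0|$, then $\mathcal{H}^{n-1}(Z(u)\cap B_4)\ge C/N^{n-1}$. The mechanism is a concentric-shell pigeonhole. Normalize $\sup_{B_1}|u_0|=1$; a Harnack-type inequality (Lemma~\ref{Harnack}) gives $\sup_{B_r}u_0\ge C_1(n)$ and $\inf_{B_r}u_0\le -C_1(n)$ for every $r\in[2,3]$. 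Slice $[2,3]$ into $k\sim N$ shells; on at least $k/3$ of them the ratios $M_{i+1}/M_i$ and $m_{i+1}/m_i$ are bounded (doubling pigeonhole), and on each such shell a gradient estimate produces a ball of radius $c/N$ where $u_0>C_1/2$ and another where $u_0<-C_1/2$. Since the threshold $C_1/2$ depends only on $n$, the perturbed function $u$ inherits these signs once $\varepsilon<C_1/2$, and an elementary nodal-separation estimate (no Logunov needed) gives $\mathcal{H}^{n-1}(Z(u))\ge C/N^{n-1}$. The $N$-dependence of $\varepsilon_0$ in Theorem~\ref{uniform lower bound} then enters only through making the homogenization error in Theorem~\ref{Approximation} small relative to $\sup_{B_{3/16}}|u_0|$, which involves the doubling constant. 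In short: you do not need to find a single good sub-ball carrying both a nodal point and controlled frequency; the Harnack-type inequality manufactures constant-size oscillation on \emph{every} sphere, and the $1/N^{n-1}$ comes solely from the radius of the sign-definite balls produced by the gradient bound.
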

Combining Theorem \ref{almost sharp lower bound}, we conclude the following uniform lower bound estimate of homogenization equations:
\begin{theorem}
    Assume $A\in \mathcal{A}$. Let $u_{\varepsilon}\in H^{1}(B(0,1))$ be a solution of $\mathcal{L}_{\varepsilon}(u_{\varepsilon})=0$ in $B(0,1)\subset \mathbb{R}^n,n\geq 3$, where $0<\varepsilon<1$. Let $N_{0}=N_{u_{\varepsilon}}(0,1)$. Then 
    \begin{equation}
        \mathcal{H}^{n-1}(\{u_{\varepsilon}=0\}\cap B(0,1))\geq C\left( N_0 \right)
    \end{equation}
    whenever $u_{\varepsilon}(0)=0$ and $C\left( N_0 \right)$ only depends on $A,n,N_0$
\end{theorem}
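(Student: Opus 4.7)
The statement is essentially a splicing of Theorem~\ref{uniform lower bound} with Theorem~\ref{almost sharp lower bound}, so the plan is a dichotomy in $\varepsilon$: use the homogenization estimate when $\varepsilon$ is small, and use the almost--sharp estimate for $C^{1}$ coefficients when $\varepsilon$ is bounded below. Let $\varepsilon_{0}=\varepsilon_{0}(A,n,N_{0})$ denote the threshold produced by Theorem~\ref{uniform lower bound}.

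If $0<\varepsilon<\varepsilon_{0}$, Theorem~\ref{uniform lower bound} applies directly and gives
\[
\mathcal{H}^{n-1}(\{u_{\varepsilon}=0\}\cap B(0,1))\geq \frac{C(A,n)}{N_{0}^{n-1}},
\]
which already depends only on $A,n,N_{0}$.

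If $\varepsilon_{0}\leq \varepsilon<1$, the coefficient matrix $A(\cdot/\varepsilon)$ inherits the uniform ellipticity from $\mathcal{A}$ and satisfies $\|A(\cdot/\varepsilon)\|_{C^{1}}\leq \|A\|_{C^{1}}/\varepsilon_{0}$, whose right--hand side depends only on $A,n,N_{0}$. Hence the $C^{1}$--coefficient extension of Theorem~\ref{almost sharp lower bound} (mentioned right after its statement in the excerpt) applies to $u_{\varepsilon}$ with constants controlled in terms of $A,n,N_{0}$. After rescaling so that the $4B$ convention fits inside $B(0,1)$, one gets
\[
\mathcal{H}^{n-1}(\{u_{\varepsilon}=0\}\cap B(0,1))\geq c(A,n,N_{0},\eta)\, N_{u_{\varepsilon}}(0,r_{0})^{1-\eta}
\]
for a fixed $r_{0}<1$ and any $\eta>0$. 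Standard doubling index monotonicity for elliptic equations with controlled $C^{1}$ coefficients then allows one to bound $N_{u_{\varepsilon}}(0,r_{0})$ from below in terms of $N_{0}$ up to a multiplicative constant depending only on $A$ and $n$. Taking the minimum of the two bounds produces the desired $C(N_{0})=C(A,n,N_{0})$.

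The genuinely new ingredient is Theorem~\ref{uniform lower bound}; the role of this final theorem is largely organizational. The main (and essentially only) delicate point is verifying that the dependence of the constant in the $C^{1}$--coefficient version of Theorem~\ref{almost sharp lower bound} is sufficiently quantitative in the $C^{1}$ norm of the coefficients to absorb the $1/\varepsilon_{0}$ blow--up, together with the analogous bookkeeping for doubling index monotonicity. Since $\varepsilon_{0}$ itself is a function of $A,n,N_{0}$, any such polynomial or exponential dependence still leaves an estimate that depends only on $A,n,N_{0}$, so the dichotomy closes.
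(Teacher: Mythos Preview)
Your dichotomy in $\varepsilon$ is exactly the paper's approach: the authors state this theorem immediately after Theorem~\ref{uniform lower bound} with the one-line justification ``Combining Theorem~\ref{almost sharp lower bound}, we conclude\ldots'', and your plan correctly unpacks this into the two cases $\varepsilon<\varepsilon_0$ and $\varepsilon\ge\varepsilon_0$, using that $\|A(\cdot/\varepsilon)\|_{C^1}\le \|A\|_{C^1}/\varepsilon_0$ in the latter. One small simplification: you do not actually need to bound $N_{u_\varepsilon}(0,r_0)$ from below in terms of $N_0$, since the $C^1$-coefficient version of Theorem~\ref{almost sharp lower bound} already contains (as its weakest consequence) a constant lower bound depending only on the ellipticity and the $C^1$ norm of the coefficients; that is enough here because the theorem only claims a bound of the form $C(A,n,N_0)$, not one that grows with $N_0$.
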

In dimension two, we use a different approach. We will only use blow up argument and maximum principle to derive a constant lower bound:
\begin{theorem}\label{thm1.5}
    Assume $A\in \mathcal{A}$. Let $u_{\varepsilon}\in H^{1}(B(0,1))$ be a solution of $\mathcal{L}_{\varepsilon}(u_{\varepsilon})=0$ in $B(0,1)\subset \mathbb{R}^2$. Then there exsits a constant $C_{0}>0$ depending only on $A$ such that the following estimate holds
    \begin{equation}
        \mathcal{H}^{1}(\{u_{\varepsilon}=0\}\cap B(0,1))\geq C_{0}
    \end{equation}
    whenever $u_{\varepsilon}(0)=0$
\end{theorem}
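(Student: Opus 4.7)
The plan is to argue by contradiction via a blow-up / compactness procedure: extract a limiting solution $u^*$, use the strong maximum principle to produce a definite sign structure for $u^*$ on every small circle around the origin, and then transfer this structure back to $u_\varepsilon$ by uniform convergence and Eilenberg's co-area inequality.

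Suppose the conclusion fails. Then there exist $\varepsilon_k \in (0,1)$ and nontrivial solutions $u_k := u_{\varepsilon_k}$ with $u_k(0) = 0$ and $\mathcal{H}^1(\{u_k = 0\} \cap B(0,1)) \to 0$. Rescale each $u_k$ so that the sequence is bounded on a slightly smaller ball and has a nontrivial interior limit (a suitable $L^\infty$ normalization combined with the uniform doubling inequality -- which holds for $\varepsilon$ bounded away from zero by classical theory and for $\varepsilon$ near zero by the homogenization results of Lin--Shen and Kenig--Zhu--Zhuge -- does the job). Interior De~Giorgi--Nash--Moser estimates, depending only on the ellipticity constant and hence uniform in $\varepsilon$, yield a uniform $C^\alpha$ bound on $\bar B(0, 3/4)$; passing to a subsequence, $u_k \to u^*$ uniformly on $\bar B(0, 3/4)$ and $\varepsilon_k \to \varepsilon^* \in [0,1]$. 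If $\varepsilon^* = 0$, standard homogenization identifies the limit equation as $\mathrm{div}(\hat A \nabla u^*) = 0$; if $\varepsilon^* > 0$, the $\mathbb{Z}^n$-periodicity and continuity of $A$ give $A(\cdot / \varepsilon_k) \to A(\cdot / \varepsilon^*)$ locally uniformly, and passing to the limit in the weak form produces $\mathcal{L}_{\varepsilon^*} u^* = 0$. In either case $u^*$ is a continuous nontrivial solution to a uniformly elliptic divergence-form equation, so it satisfies the strong maximum principle and unique continuation, with $u^*(0) = 0$.

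Since $u^* \not\equiv 0$, the strong maximum principle prevents $u^*(0) = 0$ from being either a maximum or a minimum. Applying the maximum principle to $u^*$ and $-u^*$ on the ball $B(0, t)$, we conclude that for every $t \in (0, 1/2]$ the quantities
\begin{equation*}
M^+(t) := \max_{\Sigma_t} u^*, \qquad M^-(t) := \min_{\Sigma_t} u^*, \qquad \Sigma_t := \partial B(0, t),
\end{equation*}
satisfy $M^+(t) > 0 > M^-(t)$, and moreover $M^+$ is non-decreasing and $M^-$ is non-increasing in $t$. Fix $\delta \in (0, 1/2)$ and set $\eta := \tfrac{1}{2}\min\bigl(M^+(\delta), -M^-(\delta)\bigr) > 0$. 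Uniform convergence $u_k \to u^*$ then forces, for all sufficiently large $k$ and every $t \in [\delta, 1/2]$, the function $u_k$ to attain a value at least $M^+(\delta)/2$ and a value at most $M^-(\delta)/2$ on the connected circle $\Sigma_t$; the intermediate value theorem gives $\{u_k = 0\} \cap \Sigma_t \neq \emptyset$.

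The Eilenberg co-area inequality applied to the $1$-Lipschitz function $x \mapsto |x|$ now yields, for all large $k$,
\begin{equation*}
\mathcal{H}^1(\{u_k = 0\} \cap B(0, 1)) \;\geq\; \int_{\delta}^{1/2} \mathcal{H}^0(\{u_k = 0\} \cap \Sigma_t)\, dt \;\geq\; \tfrac{1}{2} - \delta,
\end{equation*}
contradicting $\mathcal{H}^1(\{u_k = 0\} \cap B(0, 1)) \to 0$. The main obstacle is ensuring that the limit $u^*$ does not collapse, i.e.\ remains nontrivial on a fixed interior ball uniformly in $\varepsilon \in (0,1)$; a naive $L^2$ normalization could in principle let all mass of $u_k$ escape to a boundary annulus, and this is the single point at which the homogenization setting (and, in particular, a uniform doubling inequality) enters the argument in a nontrivial way. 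Once $u^*$ is known to be nontrivial, the rest of the proof uses only the strong maximum principle and is in fact valid for any continuous function with this property, matching the more general framework developed later in the paper.
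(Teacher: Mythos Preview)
Your compactness argument has a genuine gap at exactly the point you flag as the ``main obstacle'': the nontriviality of the limit $u^*$. The doubling results of Lin--Shen and Kenig--Zhu--Zhuge (Theorem~\ref{doubling} here) control the doubling index at small scales \emph{in terms of} the doubling index $N$ at the initial scale, uniformly in $\varepsilon$, but \emph{not} uniformly in the solution. Since Theorem~\ref{thm1.5} imposes no bound whatsoever on $N_{u_\varepsilon}(0,1)$, along a contradicting sequence the large-scale indices $N_k$ may tend to infinity; in that regime nothing prevents the normalized $u_k$ from concentrating in an arbitrarily thin boundary annulus and collapsing to $u^*\equiv 0$ on every interior ball. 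Your sentence ``a uniform doubling inequality \ldots\ does the job'' asserts precisely what is missing.

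The detour through compactness is in fact unnecessary, and removing it repairs the proof. Each $u_k$ is already a continuous solution of a uniformly elliptic equation and hence satisfies the strong maximum principle itself; with $u_k(0)=0$ and $u_k\not\equiv 0$, every circle $\partial B_t$ for $0<t<1$ carries both a strictly positive and a strictly negative value of $u_k$, hence at least two zeros, and your own Eilenberg step gives $\mathcal{H}^1(\{u_k=0\}\cap B_1)\geq 2$ at once --- no limit, no doubling, no homogenization input. This is exactly the mechanism behind the paper's Theorem~\ref{main results}, and it is strictly simpler than both your route and the paper's Section~3 proof of Theorem~\ref{thm1.5}, which instead splits into $\varepsilon$ large versus $\varepsilon$ small, rescales in the small-$\varepsilon$ case to place $\sim 1/\varepsilon$ unit cubes inside the domain, uses the maximum principle only to locate a zero on each $\partial Q_s$, and then invokes the $C^1$-coefficient lower bound of Lemma~\ref{constant lower bound} cube by cube with a constant made uniform by periodicity.
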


Motivated by the proof of above theorem, we try to give a much simpler proof of Nadirashvili's conjecture in dimension two. Indeed, we can prove Nadirashvili's conjecture in a more general setting. We only assume the function satisfies strong maximum principle and is continuous. Here, we recall the content of Nadirashvili's conjecture for harmonic functions:
\begin{theorem}[Nadirashvili's conjecture]
    Assume that $u$ is a harmonic function in $\mathbb{R}^n$ and $u(0)=0$. Then we have the estimate
    \begin{equation}
        \mathcal{H}^{n-1}(\{u=0\}\cap B(0,1))\geq c(n)
    \end{equation}
    for some constant $c(n)>0$ only depending on dimension.
\end{theorem}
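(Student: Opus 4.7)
The plan is to argue by contradiction via compactness and blow-up, using only the strong maximum principle and standard interior estimates for harmonic functions, in the same spirit as the proof of Theorem~\ref{thm1.5}.

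Suppose no dimensional constant $c(n)$ works, so there is a sequence of nontrivial harmonic functions $u_{k}$ in $\mathbb{R}^{n}$ with $u_{k}(0)=0$ and $\mathcal{H}^{n-1}(\{u_{k}=0\}\cap B(0,1))\to 0$. If $N_{u_{k}}(0,1)\to \infty$ along some subsequence, then Theorem~\ref{almost sharp lower bound} already gives $\mathcal{H}^{n-1}(\{u_{k}=0\}\cap B(0,1))\gtrsim N_{u_{k}}(0,1/2)^{1-\varepsilon}\to\infty$, contradicting the assumption; so I may assume $N_{u_{k}}(0,1)\leq N_{0}$ uniformly. I would then normalize $v_{k}=u_{k}/\|u_{k}\|_{L^{2}(B(0,1))}$; the doubling bound forces $\|v_{k}\|_{L^{2}(B(0,1/2))}\geq c(N_{0})>0$, so by interior regularity and Arzel\`a--Ascoli a subsequence converges in $C^{\infty}_{\mathrm{loc}}(B(0,1))$ to a nontrivial harmonic $v_{\infty}$ with $v_{\infty}(0)=0$.

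The strong maximum principle applied to $v_{\infty}$ forces it to take both positive and negative values in every neighborhood of the origin. Its Taylor expansion at $0$ into homogeneous harmonic polynomials has a leading term $P_{d}\not\equiv 0$ of some degree $d\geq 1$, and the nodal cone $\{P_{d}=0\}$ is a nontrivial $(n-1)$-dimensional algebraic variety, so $\mathcal{H}^{n-1}(\{v_{\infty}=0\}\cap B(0,1/2))>0$. Combined with the lower semi-continuity of nodal volume under $C^{\infty}_{\mathrm{loc}}$ convergence of harmonic functions with uniformly bounded doubling index,
\[
0<\mathcal{H}^{n-1}(\{v_{\infty}=0\}\cap B(0,1/2))\leq\liminf_{k}\mathcal{H}^{n-1}(\{v_{k}=0\}\cap B(0,1/2))=0,
\]
which is the desired contradiction.

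The hardest step will be the lower semi-continuity of the nodal volume. On the regular stratum $\{v_{\infty}=0,\ \nabla v_{\infty}\neq 0\}$ it follows directly from $C^{1}$-convergence and the implicit function theorem, but the singular stratum requires the stratification theory of Han--Lin or the quantitative Naber--Valtorta covering estimates, executed at a scale depending only on the uniform doubling bound $N_{0}$. A secondary subtlety is the interior normalization: passing to $B(0,1/2)$ rather than $B(0,1)$ avoids concentration of $L^{2}$-mass on $\partial B(0,1)$, which is essential for ensuring that the blow-up limit $v_{\infty}$ is nontrivial.
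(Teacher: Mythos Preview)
The paper does not actually prove this statement: Nadirashvili's conjecture is recorded as a known theorem of Logunov and cited without argument. The paper's own contributions are the two-dimensional results (Theorems~\ref{thm1.5}, \ref{general conclusion}, \ref{main results}), which recover the case $n=2$ by direct arguments based on the strong maximum principle, not by compactness.

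Your proposal has a structural problem rather than a technical one. You invoke Theorem~\ref{almost sharp lower bound} to dispose of the case $N_{u_{k}}(0,1)\to\infty$, but that theorem already implies Nadirashvili's conjecture outright for $n\geq 3$: any nontrivial harmonic function with $u(0)=0$ satisfies a uniform lower bound $N_{u}(0,1/2)\geq c_{0}(n)>0$ (by monotonicity of the frequency, since the vanishing order at the origin is at least $1$), and hence $\mathcal{H}^{n-1}(\{u=0\}\cap 2B)\geq c\,N_{u}(0,1/2)^{1-\varepsilon}\geq c\,c_{0}^{1-\varepsilon}$ is already the desired dimensional constant. The whole compactness and lower-semicontinuity machinery is therefore redundant once you grant yourself Theorem~\ref{almost sharp lower bound}; you are using a strictly stronger result to prove a weaker one and then adding an unnecessary second argument on top.

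Conversely, if you do \emph{not} allow Theorem~\ref{almost sharp lower bound} as input, the argument has a genuine gap: there is then no mechanism to control the case of unbounded doubling index, and the compactness step collapses because you cannot extract a nontrivial $C^{\infty}_{\mathrm{loc}}$ limit when $N_{u_{k}}\to\infty$. For $n=2$ this gap is unavoidable in your scheme, since Theorem~\ref{almost sharp lower bound} is stated only for $n\geq 3$. By contrast, the paper's Section~4 argument for $n=2$ sidesteps compactness entirely: it works directly from the strong maximum principle and a covering of the annuli $\{s-\delta\leq |x|\leq s\}$ to obtain the sharp constant $2$, with no doubling index entering at all.
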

Before stating the last main result, we need the following definition:
\begin{definition}
    Let $B_1\subseteq \mathbb{R}^n$
    \begin{equation}
    SMP\left( B_1 \right) :=\left\{ u\in C^0\left( \overline{B_1} \right) :\text{for    }\forall \varOmega \subseteq B_1,\varOmega \text{ open},u|_{\varOmega}
\text{ cannot take maximum or minimum } in\,\,\varOmega \right\}   
    \end{equation}
    And we say $u$ is an SMP function if $u\in SMP\left( B_1 \right) $.
\end{definition}
Then we will state our last main result in this paper as the following:
\begin{theorem} \label{main results}
   We have the following equality holds:
    \begin{equation}
        \underset{\begin{array}{c}
	u\in SMP\left( B_1 \right)
\end{array}}{inf}\mathcal{S}^1( \left\{ u=u(0) \right\} \cap \overline{B_1} )= 2
    \end{equation}
\end{theorem}
In fact, we can pose weaker condition to $u$ as below:
\begin{definition}
        Let
    \begin{equation}
    WSMP\left( B_1 \right) :=\left\{ u\in C^0\left( \overline{B_1} \right) :\text{for    }\forall 0<r\leq 1,u|_{B_r}
 \text{ cannot take maximum or minimum } in\,\,B_r \right\}   
    \end{equation}
    We say $u$ is a WSMP function if $u\in WSMP\left( B_1 \right) $.
\end{definition}
By a similar argument, we can derive the following theorem:
\begin{theorem}
    We have the following equality holds:
    \begin{equation}
        \underset{\begin{array}{c}
	u\in WSMP\left( B_1 \right)\\
\end{array}}{inf}\mathcal{S}^1( \left\{ u=u(0) \right\} \cap \overline{B_1} )= 2
    \end{equation}
\end{theorem}

We mention again that this theorem actually works for a wide range of solutions of PDEs not only solutions to elliptic PDEs. This results shows that the lower bound in dimension two is a conseqence of maximum principle rather than the structure of PDE. The lower bound $2$ is sharp. In fact, if we assume the set $\left\{ u=0 \right\} \cap B_1$ is countably rectifiable, then this theorem is easy to prove by coarea formula. Here we do not assume it is countably rectifiable. Thus, we need a new analytical method to prove the lower bound of the nodal set.

\textbf{Notations.} We use the following notations in this paper.
\begin{itemize}
    \item We denote positive constants by $C,C_1,\cdots$. We may write $C(a_{1},a_{2},\cdots)$ to highlight its dependence on parameters $a_{1},a_{2},\cdots$. The value of $C$ may vary from line to line.
    \item We denote the ball centered at $x$ with radius $r$ by $B(x,r)=\{y\in \mathbb{R}^{n}:|x-y|\leq r\}$. We will write $B_r$ if $x$ is fixed or it is $0$.
    \item We denote the $n$-dimensional Hausdorff measure by $\mathcal{H}^{n}$.
    \item We denote the $n$-dimensional Spherical measure by $\mathcal{S}^{n}$.
    \item For a given function $u$, we sometimes denote its nodal set by $Z(u)=\{x:u(x)=0\}$.
\end{itemize}

\indent \textbf{Acknowledgements.} Jiahuan Li was supported by National Natural Science Foundation of China [grant number 12141105]. The authors would like to thank anyone who read the draft of this paper and give useful comments.

\section{Uniform Lower Bound for Elliptic Homogenization}

\subsection{Properties from Homogenization Theory}

In this subsection, we will first collect several important properties from homogenization theory. Most of these results can be found in \cite{MR4764741} and \cite{MR4387203}.

\begin{theorem} \label{doubling}
    Assume $A\in \mathcal{A}$. Let $u_{\varepsilon}\in H^{1}(B(0,2))$ be a solution of $\mathcal{L}_{\varepsilon}(u_{\varepsilon})=0$ in $B(0,2)\subset\mathbb{R}^n,n\geq 3$. If $u_{\varepsilon}$ satisfies the following duobling inequality
    \begin{equation}
        \int_{B_{2}}u_{\varepsilon}^{2}\leq N\int_{B_{1}}u_{\varepsilon}^{2}
    \end{equation}
    then for any $x\in B_{1/3}$ and any $B_{2r}(x)\subset B_{2}$, we have
    \begin{equation}
        \int_{B_{2r}(x)}u_{\varepsilon}^{2}\leq \mathrm{exp}(\mathrm{exp}(CN^{\frac{2}{\beta-\frac{3}{4}}}))\int_{B_{r}(x)}u_{\varepsilon}^{2}
    \end{equation}
    Here $C$ only depends on $A,n$ and $\beta$ is any number in the interval $(\frac{3}{4},1)$.
\end{theorem}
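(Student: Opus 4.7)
The plan is to combine the compactness framework of Lin--Shen with the explicit convergence rates of Kenig--Zhu--Zhuge and then iterate across scales. After normalizing so that $\int_{B_1}u_\varepsilon^2=1$, the hypothesis becomes $\int_{B_2}u_\varepsilon^2\leq N$, and the task is to produce a doubling constant on every ball $B_{2r}(x)\subset B_2$ with $x\in B_{1/3}$ that is independent of $r$ and $\varepsilon$ but is allowed to blow up in $N$.

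The first step is to split the scales at a threshold $r_0=r_0(\varepsilon,N)$. On the large-scale regime $r\geq r_0$, I would approximate $u_\varepsilon$ on $B_{2r}(x)$ by the solution $u_0$ of the homogenized equation $\mathrm{div}(\widehat A\nabla u_0)=0$ with matching boundary data, and use the $L^2$ convergence rate
\[
\|u_\varepsilon-u_0\|_{L^2(B_{2r}(x))}\leq C(\varepsilon/r)^{\beta}\|u_\varepsilon\|_{L^2(B_{4r}(x))},
\]
available for any $\beta\in(3/4,1)$. Since $u_0$ solves a constant-coefficient equation and is therefore real-analytic, Almgren's frequency monotonicity provides doubling with constant $\exp(CN')$, where $N'$ is controlled by the global doubling index $N$. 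Asking the approximation error to be absorbed into $\|u_\varepsilon\|_{L^2(B_r(x))}$ forces roughly $(\varepsilon/r_0)^{\beta}\lesssim e^{-CN^{c}}$, which fixes $r_0$ and yields a single-exponential doubling inequality on this range of scales.

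For $r<r_0$ I would rescale: the function $v(y)=u_\varepsilon(ry+x)$ satisfies a divergence-form equation with smooth (non-homogenized) coefficients on $B_2$ whose bounds depend only on $A$. The classical Garofalo--Lin doubling inequality for such equations then gives a doubling constant depending on the frequency of $v$, and that frequency is in turn bounded by the large-scale doubling constant produced in the previous step. Composing the two layers (large-scale iteration down to $r_0$, then small-scale doubling inside the $\varepsilon$-regime) is what produces the double-exponential constant $\exp(\exp(CN^{2/(\beta-3/4)}))$ claimed.

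The main obstacle, and the step I expect to absorb most of the technical work, is the quantitative balancing that determines the exponent $2/(\beta-3/4)$. The exponent $\beta<1$ is what the homogenization convergence rate provides, while the $3/4$ arises from the $H^1$-to-$L^2$ transfer (via Caccioppoli/Moser iteration) needed to express the Almgren frequency of $u_0$ in terms of the $L^2$ data actually controlling $u_\varepsilon$. Sharpening the constant would require a more refined approximation theorem for $u_\varepsilon-u_0$, and it is precisely the lower bound $\beta>3/4$ on the convergence rate in these quantitative estimates that forces the double exponential rather than a single one.
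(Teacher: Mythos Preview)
The paper does not prove this statement at all. Theorem~\ref{doubling} is presented in Section~2.1 as a quoted result from the literature, with the surrounding text saying ``Most of these results can be found in \cite{MR4764741} and \cite{MR4387203}.'' There is therefore no ``paper's own proof'' to compare against; the authors simply import the doubling inequality from Kenig--Zhu--Zhuge and Lin--Shen as a black box and use it downstream.

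That said, your sketch is broadly consistent with how the cited references obtain the estimate: a scale-splitting at a threshold $r_0(\varepsilon,N)$, harmonic approximation with a quantitative convergence rate on the large-scale regime, Almgren-type monotonicity for the homogenized limit, and classical Garofalo--Lin doubling after rescaling on the small-scale regime. The double exponential does indeed arise from composing two layers of iteration, and the role of the convergence exponent $\beta$ is as you describe. One cautionary remark: your attribution of the threshold $3/4$ to an ``$H^1$-to-$L^2$ transfer via Caccioppoli/Moser iteration'' is speculative and not quite the mechanism in Kenig--Zhu--Zhuge; the specific exponent comes from the quantitative three-ball inequality and the way the approximation error is absorbed across iterated scales, so if you intend to write this up in detail you should verify that step against the actual source rather than rely on the heuristic you gave.
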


\begin{theorem} \label{Approximation}
    Assume $A\in \mathcal{A}$. Let $u_{\epsilon}\in H^{1}(B(0,2r))$ be a solution of $\mathcal{L}_{\varepsilon}(u_{\varepsilon})=0$ in $B(0,2r)$. Suppose $r>C\sqrt{N}\varepsilon$ for some large $C$ and $u_{\varepsilon}$ satisfies the doubling inequality
    \begin{equation}
        \int_{B_{2r}}u_{\varepsilon}^{2}\leq N\int_{B_{r}}u_{\varepsilon}^{2}
    \end{equation}
    Then there exists a harmonic function $u_{0}$ in $B_{\frac{7}{4}r}$ such that
    \begin{equation}
        ||u_{\varepsilon}-u_{0}||_{L^{\infty}(B_{\frac{3}{2}r})}\leq \frac{C\varepsilon}{r}(\fint_{B_{2r}}u_{\varepsilon}^{2})^{\frac{1}{2}}
    \end{equation}
    and 
    \begin{equation}
        \int_{B_{r}}u_{0}^{2}\leq 16N^{2}\int_{B_{r/2}}u_{0}^{2}
    \end{equation}
    where $C$ only depends on $A,n$.
\end{theorem}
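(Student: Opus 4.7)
The plan is to construct $u_{0}$ as the homogenized replacement of $u_{\varepsilon}$ on a ball slightly larger than $B_{3r/2}$, and then separately establish the $L^{\infty}$ approximation and the $L^{2}$ doubling inequality. Specifically, I would let $u_{0}\in H^{1}(B_{7r/4})$ be the weak solution of $\mathrm{div}(\widehat{A}\nabla u_{0})=0$ in $B_{7r/4}$ with boundary data $u_{0}=u_{\varepsilon}$ on $\partial B_{7r/4}$, where $\widehat{A}$ is the constant homogenized matrix associated with $A$. Since $\widehat{A}$ is symmetric and positive-definite, $u_{0}$ becomes harmonic after the linear change of variables $y=\widehat{A}^{1/2}x$, which is the sense in which the conclusion refers to $u_{0}$ as ``harmonic.''

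For the $L^{\infty}$ error estimate I would use the first-order two-scale expansion. Let $\chi=(\chi^{j})_{j=1}^{n}$ denote the correctors for $\mathcal{L}_{\varepsilon}$ and let $\eta$ be a smooth cutoff supported in $B_{3r/2}$, identically one on a slightly smaller ball. Set $w_{\varepsilon}:=u_{\varepsilon}-u_{0}-\varepsilon\,\eta\,\chi^{j}(\cdot/\varepsilon)\partial_{j}u_{0}$. A direct computation shows that $w_{\varepsilon}$ satisfies an inhomogeneous equation $\mathcal{L}_{\varepsilon}w_{\varepsilon}=\mathrm{div}(F_{\varepsilon})$ whose right-hand side is controlled by $C\varepsilon\,\|\nabla u_{0}\|_{L^{2}(B_{3r/2})}$. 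Combining the uniform interior Lipschitz estimate of Avellaneda--Lin for $\mathcal{L}_{\varepsilon}$ with interior Schauder/Caccioppoli bounds for the constant-coefficient operator $\mathrm{div}(\widehat{A}\nabla\cdot)$ on $u_{0}$, and using $\|u_{0}\|_{L^{2}(B_{7r/4})}\leq C\|u_{\varepsilon}\|_{L^{2}(B_{2r})}$ by energy estimates, yields the bound $\|u_{\varepsilon}-u_{0}\|_{L^{\infty}(B_{3r/2})}\leq C\varepsilon r^{-1}(\fint_{B_{2r}}u_{\varepsilon}^{2})^{1/2}$. This is the standard $O(\varepsilon)$ interior convergence result in periodic homogenization.

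To obtain the doubling inequality for $u_{0}$, I would transfer the doubling from $u_{\varepsilon}$ using the $L^{\infty}$ bound above. For $\rho\in\{r/2,\,r\}$,
\begin{equation*}
\int_{B_{\rho}}(u_{\varepsilon}-u_{0})^{2}
\;\leq\; C\frac{\varepsilon^{2}}{r^{2}}\rho^{n}\fint_{B_{2r}}u_{\varepsilon}^{2}
\;\leq\; C\frac{\varepsilon^{2}N}{r^{2}}\int_{B_{r}}u_{\varepsilon}^{2},
\end{equation*}
where the doubling hypothesis on $u_{\varepsilon}$ was used. Under the smallness condition $C\sqrt{N}\varepsilon<1$, the error term is a small fraction of $\int_{B_{r}}u_{\varepsilon}^{2}$, so $\int_{B_{\rho}}u_{0}^{2}$ and $\int_{B_{\rho}}u_{\varepsilon}^{2}$ are comparable at each of the two scales, up to absolute multiplicative constants. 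Since $u_{0}$ is (after change of variables) harmonic, the Almgren frequency function at the origin is monotone in $\rho$, so a doubling inequality at the scale $(2r,r)$ propagates to the scale $(r,r/2)$. Tracking the accumulated multiplicative losses from the two approximation comparisons together with the one frequency-monotonicity step gives the stated factor $16N^{2}$.

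The main obstacle is the $O(\varepsilon)$ $L^{\infty}$ rate. Getting the sharp power of $\varepsilon$ (rather than $\varepsilon^{1/2}$ from energy arguments) requires the full strength of the Avellaneda--Lin uniform Lipschitz theory, and one must carefully absorb the boundary-layer contribution of $\varepsilon\eta\chi(\cdot/\varepsilon)\nabla u_{0}$ near $\partial B_{3r/2}$, where the cutoff destroys the corrector identity. A secondary difficulty is organizing the doubling-transfer cleanly: the approximation is bounded in terms of $\fint_{B_{2r}}u_{\varepsilon}^{2}$, one scale above where the conclusion is stated, so one must exploit the input doubling hypothesis to close the comparison at scales $r$ and $r/2$, and this is exactly where the hypothesis $C\sqrt{N}\varepsilon<1$ is forced on us.
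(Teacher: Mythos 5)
This theorem is not proved in the paper; it is quoted verbatim from the cited references (Lin--Shen and Kenig--Zhu--Zhuge), so the comparison is against the standard homogenization argument there. Your overall plan matches that argument: take $u_{0}$ solving the homogenized constant-coefficient equation with $u_{\varepsilon}$'s boundary data, use the two-scale expansion with correctors and the Avellaneda--Lin uniform Lipschitz estimate to obtain the $O(\varepsilon/r)$ interior $L^{\infty}$ rate, and then transfer the doubling index to $u_{0}$. Your remark that $u_{0}$ is literally harmonic only after the linear change of variables $y=\widehat{A}^{-1/2}x$ is also the right reading of the statement; the references perform this normalization at the outset so that $\widehat{A}=I$.

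The one place your sketch does not quite close is the doubling-transfer step. You compare $\int_{B_{\rho}}u_{0}^{2}$ with $\int_{B_{\rho}}u_{\varepsilon}^{2}$ at $\rho\in\{r/2,r\}$ and then assert that ``a doubling inequality at the scale $(2r,r)$ propagates to the scale $(r,r/2)$.'' But you have no way to establish a doubling inequality for $u_{0}$ at the pair $(2r,r)$: the $L^{\infty}$ approximation is only available on $B_{3r/2}$, so $\int_{B_{2r}}u_{0}^{2}$ and $\int_{B_{2r}}u_{\varepsilon}^{2}$ cannot be compared, and the hypothesis gives doubling of $u_{\varepsilon}$ only at $(2r,r)$, not at $(r,r/2)$. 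What you actually need is to compare at the outer radius $3r/2$, where the approximation still holds and where $B_{3r/2}\subset B_{2r}$ lets you use the hypothesis: from
\begin{equation*}
\int_{B_{3r/2}}u_{0}^{2}\leq 2\int_{B_{3r/2}}u_{\varepsilon}^{2}+2\int_{B_{3r/2}}(u_{\varepsilon}-u_{0})^{2}
\leq 2\int_{B_{2r}}u_{\varepsilon}^{2}+\frac{C\varepsilon^{2}N}{r^{2}}\int_{B_{r}}u_{\varepsilon}^{2}
\leq CN\int_{B_{r}}u_{\varepsilon}^{2},
\end{equation*}
and the reverse comparison $\int_{B_{r}}u_{\varepsilon}^{2}\leq 4\int_{B_{r}}u_{0}^{2}$ (after absorbing the error using $C\sqrt{N}\varepsilon<1$), one gets a doubling bound for $u_{0}$ at the pair $(3r/2,r)$. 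Only then does Almgren monotonicity (log-convexity of $\rho\mapsto\int_{\partial B_{\rho}}u_{0}^{2}$ in $\log\rho$) convert this to the pair $(r,r/2)$, with the conversion exponent $\log 2/\log(3/2)<2$ explaining why the final constant can be taken of the form $16N^{2}$. The comparison at $\rho=r/2$ is not where the work is; the outer comparison at $\rho=3r/2$ is, and as written your proposal omits it.
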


\subsection{Approximation and Volume Estimate}

In this subsection, we will use approximation to derive the desired lower bound for a fixed doubling index. But we still need to mention here that this lower bound is far from optimal. We will use harmonic approximation and the methods in \cite{MR4814921}.

We first illustrate the following Harnack-type inequality:
\begin{lemma} \label{Harnack}
    Let $B=B(x,r)\subset \mathbb{R}^{n}$ be any ball. Then for any non-constant harmonic function $u$ defined in $2B$ satisfies $u(x)\geq 0$, we have
    \begin{equation}
        \sup_{\frac{4}{3}B}u\geq C\sup_{B}|u|
    \end{equation}
    for some $C$ only depends on $n$.
\end{lemma}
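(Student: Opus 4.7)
The plan is to apply the classical Harnack inequality for non-negative harmonic functions to the envelope $v := L - u$, where $L := \sup_{\frac{4}{3}B} u$. By construction $v$ is non-negative and harmonic on $\frac{4}{3}B$. Non-constancy of $u$ together with $u(x) \geq 0$ and the strong maximum principle forces $L > 0$ (otherwise $u \leq 0$ on $\frac{4}{3}B$ with $u(x) = 0$ at the interior point $x$, which would give $u \equiv 0$). Since $B \subset \frac{4}{3}B$ with ratio of radii $3/4$, the standard Harnack inequality yields $\sup_B v \leq C_n \inf_B v$ with $C_n$ depending only on the dimension.

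Next I would exploit the sign hypothesis at the center to control the right-hand side, namely $\inf_B v \leq v(x) = L - u(x) \leq L$. Combined with Harnack, this gives $\sup_B v = L - \inf_B u \leq C_n L$, that is, $\inf_B u \geq -(C_n - 1) L$. The other one-sided estimate $\sup_B u \leq \sup_{\frac{4}{3}B} u = L$ is trivial. Taking absolute values then yields $\sup_B |u| \leq \max(1, C_n - 1) \cdot L$, which is the claimed inequality with $C := 1/\max(1, C_n - 1)$.

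There is really no obstacle here; the whole argument is the textbook device of applying Harnack to $L - u$, which converts a pointwise sign condition at one point into two-sided oscillation control on a slightly smaller concentric ball. The only mild technicalities to verify are that $L$ is finite (guaranteed because $\overline{\tfrac{4}{3}B}$ is compactly contained in the harmonicity domain $2B$) and that the Harnack constant for the concentric pair $B \subset \frac{4}{3}B$ depends only on dimension, which is classical.
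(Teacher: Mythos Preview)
Your proof is correct. The paper itself does not give a proof of this lemma, only citing \cite{MR4814921}; however, the paper proves the analogous statement for divergence-form elliptic operators (Lemma~\ref{Harnack type}) by exactly the device you use---applying the classical Harnack inequality to $v=L-u$ with $L=\sup u$ on the larger ball, then exploiting $u(x)\ge 0$ to bound $\inf v$ by $L$. So your argument is both correct and in line with the method the paper employs in the more general setting.
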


The proof can be found in \cite{MR4814921}. Next we will prove the following lemma:

\begin{lemma} \label{Main Lemma}
    Suppose $u_{0}$ is harmonic function in $B(0,4),u_{0}(0)=0$ and $u$ is another continuous function. Assume $u_{0}$ is not constant and satisfies the following doubling inequality
    \begin{equation}
        \log_{2}\frac{\sup_{B(0,4)}|u_{0}|}{\sup_{B(0,2)}|u_{0}|}\leq N
    \end{equation}
    Then there exists a constant $\varepsilon>0$ depending on $n$ and a constant $C$ depending on $N,n$ such that if 
    \begin{equation}
        \sup_{B(0,4)}|u-u_{0}|\leq \varepsilon\sup_{B(0,1)}|u_{0}|
    \end{equation}
    then we have 
    \begin{equation}
        \mathcal{H}^{n-1}(\{u=0\}\cap B(0,4))\geq C
    \end{equation}
\end{lemma}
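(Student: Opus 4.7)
The plan is to find two small balls of definite radius inside $B(0,3)$ on which $u_0$ takes values of opposite sign and magnitude comparable to $M := \sup_{B(0,3/2)} |u_0|$, transfer this sign information to $u$ via the $L^\infty$-closeness hypothesis, and then apply a projection argument in the direction joining the two balls to lower-bound the $(n-1)$-dimensional Hausdorff measure of $\{u=0\}$. To produce the two sign-certifying balls I would apply Lemma \ref{Harnack} with $B = B(0, 3/2)$ (so that $2B = B(0,3) \subset B(0,4)$ and $\tfrac{4}{3} B = B(0,2)$) to both $u_0$ and $-u_0$. Since $u_0(0) = 0 \ge 0$ and $u_0$ is non-constant harmonic, this produces points $y^\pm \in B(0,2)$ with $u_0(y^+) \ge C_1(n) M$ and $u_0(y^-) \le -C_1(n) M$. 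The doubling hypothesis, combined with monotonicity of Almgren's frequency for harmonic functions and the standard interior gradient estimate, then gives $\|\nabla u_0\|_{L^\infty(B(0,5/2))} \le C_2(N,n) M$. Setting $r_0 := C_1(n)/(4 C_2(N,n))$ forces $u_0 \ge C_1(n) M/2$ on $B(y^+, r_0)$ and $u_0 \le -C_1(n) M/2$ on $B(y^-, r_0)$.

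Next, I would fix $\varepsilon := C_1(n)/4$, which depends only on $n$. Since $\sup_{B(0,1)}|u_0| \le M$, the closeness hypothesis yields $\sup_{B(0,4)} |u - u_0| \le C_1(n) M/4$, so that $u > C_1(n) M/4 > 0$ on $B(y^+, r_0)$ and $u < -C_1(n) M/4 < 0$ on $B(y^-, r_0)$. For the projection step, let $e := (y^- - y^+)/|y^- - y^+|$ and let $\pi_e : \mathbb{R}^n \to e^\perp$ denote orthogonal projection. By construction $\pi_e(y^+) = \pi_e(y^-)$, so $\pi_e(B(y^+, r_0))$ and $\pi_e(B(y^-, r_0))$ coincide as a single $(n-1)$-ball $D \subset e^\perp$ of radius $r_0$. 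For every $\zeta \in D$, the line $\zeta + \mathbb{R} e$ meets both $B(y^\pm, r_0)$, and by continuity of $u$ and the intermediate value theorem it contains a zero of $u$ lying in the convex hull of the two balls, hence in $B(0, 2 + r_0) \subset B(0,4)$. Therefore $D \subset \pi_e(\{u=0\} \cap B(0,4))$, and since $\pi_e$ is $1$-Lipschitz,
\begin{equation*}
\mathcal{H}^{n-1}(\{u=0\} \cap B(0,4)) \;\ge\; \mathcal{H}^{n-1}(D) \;=\; c(n)\, r_0^{n-1} \;=:\; C(N,n) \;>\; 0.
\end{equation*}

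The main obstacle is arranging the first step so that the margin $C_1(n) M$ delivered by Lemma \ref{Harnack} dominates the allowed error $\varepsilon \sup_{B(0,1)}|u_0|$ with $\varepsilon$ depending only on $n$. This works precisely because the closeness in the hypothesis is measured against the smaller quantity $\sup_{B(0,1)}|u_0| \le M$, while Harnack delivers values comparable to $M = \sup_{B(0,3/2)}|u_0|$. Converting pointwise sign data into open neighbourhoods then requires controlling $\|\nabla u_0\|_{L^\infty}$ in terms of $M$ and $N$, and this is precisely where the $N$-dependence of $r_0$ --- and hence of the final constant $C(N,n)$ --- enters. Once the two sign-certifying balls are in hand, the projection argument via the $1$-Lipschitz projection $\pi_e$ is routine.
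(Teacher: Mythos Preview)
Your argument is correct and proves the lemma as stated, but it follows a genuinely different route from the paper's. The paper partitions the interval $[2,3]$ into $k\sim N$ radii $r_0<\cdots<r_k$ and runs a pigeonhole argument: since $\sup_{B_{r_k}}u_0/\sup_{B_{r_0}}u_0\le 2^{CN}$, at least $k/3$ indices $i$ are ``good'' in the sense that $M_{i+1}\le S M_i$ and $-m_{i+1}\le -S m_i$ with $S$ a fixed constant. At such a good scale the interior gradient estimate on a ball of radius $\sim 1/k$ yields $|\nabla u_0|\le Ck M_i\sim CN M_i$, which produces sign-certifying balls of radius $\sim 1/N$; the projection step (the same one you use) then gives $\mathcal{H}^{n-1}(\{u=0\})\ge c/N^{n-1}$. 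You bypass the multi-scale pigeonhole entirely and work at a single fixed scale, invoking Almgren monotonicity to propagate the doubling bound down and then the global gradient estimate. This is shorter and conceptually cleaner, but you pay in the constants: since you only have $\sup_{B_3}|u_0|\le 2^{CN}M$, your $C_2(N,n)$ is exponential in $N$, whence $r_0\sim 2^{-CN}$ and the final lower bound is of order $2^{-C(n-1)N}$ rather than $N^{-(n-1)}$. Both establish the lemma (the statement only asks for some $C(N,n)>0$), but the paper's polynomial dependence is exactly what is exported to Theorem~\ref{uniform lower bound}; your version would yield only an exponential-in-$N_0$ lower bound there.
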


\begin{proof}
    Denote $2=r_{0}<r_{1}<\cdots<r_{k}=3$ and $r_{i}-r_{i-1}=\frac{1}{k}$. Without loss of generality, we may assume $\sup_{B(0,1)}|u_{0}|=1$. Let $M_{i}=\sup_{B_{r_{i}}}u_{0},m_{i}=\inf_{B_{r_{i}}}u_{0}$. By Lemma \ref{Harnack}, we know that there exists a constant $C_{1}$ such that
    \begin{equation}
        M_{i}\geq C_{1},m_{i}\leq -C_{1}
    \end{equation}
    Given a constant $S$, we say the index $i$ is $S^{+}$-good if 
    \begin{equation}
        M_{i+1}\leq SM_{i}
    \end{equation}
    We say the index $i$ is $S^{-}$-good if 
    \begin{equation}
        -m_{i+1}\leq -Sm_{i}
    \end{equation}
    Let $k^{+}=\# \{i:i \text{ is } S^{+} \text{-good}\}$ and $k^{-}=\# \{i:i \text{ is } S^{-} \text{-good}\}$. Then by definition, we know that
    \begin{equation}
         \log \frac{M_{k}}{M_{0}}=\sum_{i \text{ is } S^{+}\text{-good}}\log\frac{M_{i+1}}{M_{i}}+\sum_{i \text{ is not } S^{+}\text{-good}}\log\frac{M_{i+1}}{M_{i}}\geq \log S^{k-k^{+}}
    \end{equation}
    At the same time, we have
    \begin{equation}
        \frac{M_{k}}{M_{0}}=\frac{\sup_{B(0,3)}u_{0}}{\sup_{B(0,2)}u_{0}}\leq \frac{\sup_{B(0,4)}|u_{0}|}{C_{1}}\leq 2^{CN}
    \end{equation}
    So we get
    \begin{equation}
        S^{k-k^{+}}\leq 2^{CN}
    \end{equation}
    As a result, we obtain a lower bound of $k^{+}$ as the following
    \begin{equation}
        k^{+}\geq k-CN
    \end{equation}
    Similarily, we can get the same lower bound for $k^{-}$. Then we take $k=3CN$. So we have
    \begin{equation}
        k^{+}\geq 2CN\geq \frac{2}{3}k,\quad k^{-}\geq 2CN\geq \frac{2}{3}k
    \end{equation}
    So we know that there are at least $\frac{k}{3}$ indices that are both $S^{+}$-good and $S^{-}$-good. For any $i$ that is both $S^{+}$-good and $S^{-}$-good, we assume $u_{0}(x_{1})=\sup_{B_{i}}u_{0}$. By standard gradient estimate, we know that
    \begin{equation}
        \sup_{B(x_{i},1/4k)}|\nabla u_{0}|\leq Ck\sup_{B(x_{i},1/3k)}| u_{0}|\leq Ck\sup_{B(x_{i},1/2k)}u_{0}
    \end{equation}
    where we used Lemma \ref{Harnack} in the last inequality. At the same time, we have
    \begin{equation}
        Ck\sup_{B(x_{i},1/2k)}u_{0}\leq CkM_{i+1}\leq CkSM_{i}=CNSM_{i}
    \end{equation}
    by our choice of $k$. By mean-value theorem, we know that in the ball $B(x_{i},\frac{C}{N})$, the following estimate holds:
    \begin{equation}
        u_{0}(x)\geq \frac{1}{2}M_{i}\geq C
    \end{equation}
    We can also apply this argument to $u_{0}(x_{2})=\inf_{B_{i}}u_{0}$. We know that in the ball $B(x_{2},\frac{C}{N})$, the following estimate holds:
    \begin{equation}
        u_{0}(x)\leq -\frac{1}{2}m_{i}\leq -C
    \end{equation}
    So if $\varepsilon<\frac{1}{2}C$, we know that
    \begin{equation}
        u(x)>0\,\,\,\, \text{in } B(x_{1},\frac{C}{N}) \text{ and } u(x)<0\,\,\,\, \text{in } B(x_{2},\frac{C}{N})
    \end{equation}
    By standard nodal sets estimate, we know that
    \begin{equation}
        \mathcal{H}^{n-1}(\{u=0\}\cap B(0,4))\geq \frac{C}{N^{n-1}}
    \end{equation}
    
\end{proof}

With this lemma, we can prove our final results:

\begin{proof}[Proof of Theorem \ref{uniform lower bound}]
    Indeed, we only need to verify conditions in Lemma \ref{Main Lemma}. The second condition is satisfied by Theorem \ref{Approximation} after scaling and doubling inequality. We just need to verify the first condition and the doubling inequality. Let us start with the following estimate
    \begin{align}
        ||u_{\varepsilon}||_{L^{2}(B_{3/16})}&\leq ||u_{\varepsilon}-u_{0}||_{L^{2}(B_{3/16})}+||u_{0}||_{L^{2}(B_{3/16})}\\
        &\leq C (||u_{\varepsilon}-u_{0}||_{L^{\infty}(B_{3/16})}+||u_{0}||_{L^{\infty}(B_{3/16})})\\
        &\leq C(N)\varepsilon ||u_{\varepsilon}||_{L^{2}(B_{3/16})}+||u_{0}||_{L^{\infty}(B_{3/16})}
    \end{align}
    So if $\varepsilon<\frac{1}{2C(N)}$, we know that
    \begin{equation}
        ||u_{\varepsilon}||_{L^{2}(B_{3/16})}\leq C(N)||u_{0}||_{L^{\infty}(B_{3/16})}
    \end{equation}
    Then we get
    \begin{equation}
        ||u_{\varepsilon}-u_{0}||_{L^{\infty}(B_{3/8})}\leq C(N)\varepsilon ||u_{0}||_{L^\infty(B_{3/16})}
    \end{equation}
    This verifies the first condition. If $\varepsilon<\varepsilon_{0}(N)$, by Theorem \ref{Approximation}, we know that
    \begin{equation}
        \int_{B_{1}}u_{0}^{2}\leq 16N^{2}\int_{B_{1/2}}u_{0}^{2}
    \end{equation}
    By $L^{\infty}$ approximation and standard argument, we know that
    \begin{equation}
        \log \frac{\sup_{B_{3/8}}|u_{0}|}{\sup_{B_{3/16}}|u_{0}|}\leq C(N)
    \end{equation}
    Finally, by Lemma \ref{Main Lemma}, we have the Theorem \ref{uniform lower bound} holds.
    
\end{proof}

\section{Special Case in Two Dimension}

In this section, we will try to prove the constant lower bound in dimension two. We will prove a stronger conclusion. And we will use methods from elliptic PDEs. All the functions in this sections are defined in $\mathbb{R}^2$. First, we need the following constant lower bound for elliptic PDEs. This lemma does not imply Theorem \ref{thm1.5} due to the presence of $\varepsilon$.

\begin{lemma}[\cite{MR4814921}] \label{constant lower bound}
Assume A is uniform elliptic and $C^1$. Let $u\in H^1\left(Q_2\right) $ be a solution of $\mathrm{div}\left( A\nabla u \right) =0$ in $Q_2$. Then there exists a constant $C$ depending only on $A$ such that if $$\left\{ u=0 \right\} \cap Q_{1}\ne \emptyset$$
then$$\mathcal{H}^1( \left\{ u=0 \right\} \cap Q_2 )\geq C$$
where $Q_r=\left\{ \left( x_1,x_2 \right) ||x_i|\leq r,i=1,2 \right\}$.
\end{lemma}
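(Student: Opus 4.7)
The plan is to prove the lemma directly from the strong maximum principle, unique continuation, and the planar topology of the nodal set in $2$D, obtaining the explicit constant $C = 1$ (independent of $A$ beyond the fact that the equation is uniformly elliptic with $C^1$ coefficients). First I would fix a zero $x_0 \in Q_1$ of the nontrivial solution $u$ and establish the following consequence of the maximum principle: every connected component $\Omega$ of $\{u > 0\} \cap Q_2$ (and likewise of $\{u < 0\} \cap Q_2$) must meet $\partial Q_2$. Indeed, if $\Omega$ were compactly contained in the interior of $Q_2$, then $u = 0$ on $\partial \Omega$ by continuity and by maximality of $\Omega$, and applying the maximum principle to $u$ in $\Omega$ with zero boundary data would force $u \equiv 0$ in $\Omega$, contradicting $u > 0$ there.

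Next I would study the connected component $K$ of $\{u = 0\} \cap Q_2$ containing $x_0$ and show that $K$ must reach $\partial Q_2$. By Hartman--Wintner regularity of the nodal set in 2D for $C^1$ coefficients, $K$ is a locally finite union of $C^1$ arcs meeting only at isolated critical zeros of $u$, at each of which an even number $2k \ge 4$ of arcs emanate. If $K$ were compactly contained in the interior of $Q_2$, then either $K$ would itself contain a simple closed nodal arc, or $K$ would be a compact planar graph with every vertex of degree $\ge 4$ and every edge joining two vertices; in the latter case minimum degree $\ge 2$ forces a simple cycle. Either way one extracts an embedded closed curve in the nodal set bounding a region $R \subset Q_2$ with $u = 0$ on $\partial R$. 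The maximum principle then forces $u \equiv 0$ in $R$, and unique continuation forces $u \equiv 0$ everywhere in $Q_1$, a contradiction. Hence there is $z \in K \cap \partial Q_2$, and since $K$ is connected and contains both $x_0 \in Q_1$ and $z \in \partial Q_2$, the standard $1$-Lipschitz projection argument yields
\[
    \mathcal{H}^1(\{u = 0\}\cap Q_2) \ge \mathcal{H}^1(K) \ge |x_0 - z| \ge \mathrm{dist}(Q_1, \partial Q_2) = 1.
\]

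The main obstacle I expect is the middle step: one needs \emph{global} (not merely local) finiteness of critical zeros in $K$ before reducing to finite planar graph theory, and one needs to verify that a cycle found in that graph---possibly passing through other critical vertices---can be realized as an embedded Jordan curve so that the Jordan curve theorem and the max-principle argument on the bounded side apply cleanly. Both issues should follow from Hartman--Wintner together with the compactness of $K$ inside $Q_2$, but they constitute the technical heart of the proof; once they are pinned down, the remaining projection estimate is elementary.
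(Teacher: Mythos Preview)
The paper does not itself prove Lemma~\ref{constant lower bound}; it is quoted from \cite{MR4814921} and used as a black box in the proof of Theorem~\ref{thm1.5}. The closest thing to an in-paper proof is Theorem~3.8 (the ball version with constant depending on $\lambda,M$), which the paper establishes by a completely different mechanism: a compactness argument gives a uniform lower bound on the doubling quotient (Lemma~\ref{doubling lower}), then a Harnack-type inequality and interior gradient estimates locate, in many concentric annuli, a ball of definite radius where $u>0$ and another where $u<0$, and finally these sign changes are summed. Section~4 offers yet a third route, valid for any continuous SMP function: for every radius $s$ the circle $\partial B_s$ carries a zero (intermediate value theorem plus strong maximum principle), and a careful covering argument converts this into $\mathcal{H}^1(Z(u)\cap \overline{B_1})\ge 2$.

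Your argument is a genuinely different and perfectly sound fourth approach, specific to the PDE setting: use Hartman--Wintner to read off the local graph structure of the nodal set, show the nodal component $K$ through $x_0$ must escape to $\partial Q_2$ (otherwise it contains a Jordan curve, and max principle plus unique continuation kill $u$), and finish with the $1$-Lipschitz radial projection to get the explicit constant $C=1$. Compared with the paper's methods, yours is more geometric, needs no doubling or gradient estimates, and yields a clean explicit constant; the price is the reliance on Hartman--Wintner and the planar graph/Jordan-curve extraction. The paper's SMP argument is more elementary still (no nodal-set regularity at all) and generalizes beyond solutions of PDEs, but uses a bespoke covering lemma rather than connectivity. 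The doubling-index proof is the least sharp but is the template that scales to the approximation results in Section~2.

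The two technical worries you flag are the right ones and both are benign here. Global finiteness of critical zeros in $K$ follows because $K$ is assumed compact inside $Q_2$ and Hartman--Wintner makes critical zeros isolated; hence the nodal graph on $K$ is finite with every vertex of even degree $\ge 4$, so $E\ge 2V$ and the first Betti number $E-V+1\ge 1$ guarantees a simple cycle. Any simple cycle in this embedded planar graph is automatically a Jordan curve (it is a finite concatenation of $C^1$ arcs meeting only at their shared endpoints), so the Jordan curve theorem and the max-principle step go through without further work. One small wording point: if $u\equiv 0$ the lemma is trivial, so you should state the nontriviality hypothesis up front (you do, but the ``contradiction'' you reach is precisely with that hypothesis, not with the lemma itself).
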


\begin{proof}[Proof of Theorem \ref{thm1.5}]

We will also consider the following two cases.

Case 1: If $\varepsilon>1/10000$, then we will use the blow-up argument directly to get the lower bound:
    \begin{equation}
        \mathcal{H}^{n-1}(\{u_{\varepsilon}=0\}\cap B(0,1))\geq C\varepsilon^{n-1}\geq C
    \end{equation}
    for some $C$ only depending on $A$.

Case 2: If $0<\varepsilon\leq 1/10000$. WLOG we can assume that $k:=1/\varepsilon$ is an integer. We will consider the equation at a unified scale, and we consider the equation as
 \begin{equation}
        \mathrm{div}\left( A\nabla u \right) =0 \ in \ Q_k
    \end{equation}
We can assume that $u$ is not identically zero, otherwise the original theorem would obviously hold. We assume that
\begin{equation}
M_s:=\underset{Q_s}{\sup} u,\ m_s:=\underset{Q_s}{\inf} u\ \forall s>0
\end{equation}
By the maximum principle, we know that
\begin{equation}
M_s>0,\ m_s<0\ \forall s>0.
\end{equation}
Hence by the continuity of $u$, we know that for any $s>0$, there exists $x_s\in \partial Q_s$ such that $u\left( x_s \right) =0$. Therefore, for any positive integer $t\leq k$, we can find a closed cube $Q^t$ whose side length is 2, such that the center of $Q^t$ is an integer point and $\left\{ u=0 \right\} \cap \frac{1}{2}Q^t\ne \emptyset$.
By Lemma \ref{constant lower bound} and the periodicity of $A$, we know there exists a constant $C$ depending only on $A$ such that
\begin{equation}
\mathcal{H}^1( \left\{ u=0 \right\} \cap Q^t ) \geqslant C
\end{equation}
This constant is independent of $x$ by periodicity. We define the number of intersections between cubes as follows
\begin{equation}
b_j:=\#\left\{ i|Q^i\cap Q^j\ne \emptyset \right\} 
\end{equation}
Obviously, there exists an universal constant $C_1$ such that 
\begin{equation}
b_j\leq C_1, \forall 1\leq j\leq k
\end{equation}
So we can find at least $k/C_1$ non-intersecting cubes from $\left\{ Q^t \right\} _{t=1}^{k}$. So there exists a constant $C$ only depending on $A$ such that
\begin{equation}
\mathcal{H}^1(\left\{ u=0 \right\} \cap Q_k ) \geq Ck
\end{equation}
\end{proof}

In fact, We can prove a more general conclusion:
\begin{theorem}  \label{general conclusion}
We assume that $u$ is a solution to the equation $\mathrm{div}\left( A\nabla u \right) =0 $ in $B_1$ and the matrix satisfies the following conditions:
\begin{equation}
\begin{cases}
	\lambda ^{-1}|\xi |^2\leq a_{ij}\xi _i\xi _j\leq \lambda |\xi |^2 \,\,\,\,\forall \xi \in \mathbb{R}^2 \\ a_{ij}\in C^1\left( B_1 \right)\\
\end{cases}
\end{equation}
for some $\lambda>0$ .Then there exists a constant $C$ depending only on $\lambda $ such that if $u(0)=0$, then
\begin{equation}
    \mathcal{H}^1(\left\{ u=0 \right\} \cap B_1 ) \geq C
\end{equation}
\end{theorem}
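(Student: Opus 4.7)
The plan is to reduce Theorem \ref{general conclusion} directly to Theorem \ref{main results}, which gives the much stronger conclusion that any continuous function on $B_1 \subset \mathbb{R}^2$ satisfying the strong maximum principle and vanishing at $0$ has $\mathcal{H}^1(\{u=0\}\cap B_1) \geq 2$. My plan therefore reduces to checking that an $H^1$ weak solution $u$ of $\mathrm{div}(A\nabla u)=0$, with $A$ uniformly elliptic and $a_{ij}\in C^1(B_1)$, satisfies both hypotheses of Theorem \ref{main results}.

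First I would verify continuity of $u$: under our assumptions, classical Schauder theory gives $u\in C^{1,\alpha}_{\mathrm{loc}}(B_1)$, and even De Giorgi--Nash--Moser (which needs only bounded measurable coefficients) suffices to get H\"older continuity with a constant depending only on $\lambda$. Next I would verify the strong maximum principle: for divergence-form uniformly elliptic operators, the Hopf maximum principle ensures that $u$ cannot attain a non-trivial interior extremum, so the second hypothesis of Theorem \ref{main results} is met. Applying Theorem \ref{main results} then yields $\mathcal{H}^1(\{u=0\}\cap B_1)\geq 2$, a constant that is universal and, in particular, depends only on $\lambda$, as required.

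The main ``obstacle'' in this approach is that the reduction offloads all analytic content onto Theorem \ref{main results}, which, as the authors emphasize, cannot be handled by a direct coarea argument (since $\{u=0\}$ is not assumed rectifiable) and therefore requires a genuinely new measure-theoretic ingredient. If instead I wanted a proof self-contained within Section 3, the natural attempt would be to mimic the argument for Theorem \ref{thm1.5}: cover $B_1$ by small cubes and apply Lemma \ref{constant lower bound} on each cube meeting $\{u=0\}$. The catch is that the constant in Lemma \ref{constant lower bound} depends on the full matrix $A$, not only on $\lambda$, and without periodicity there is no mechanism to unify these local constants across different cubes. One could try to bridge that gap via a quasiconformal reduction---in two dimensions, $u$ becomes harmonic after a quasiconformal change of coordinates whose dilation depends only on $\lambda$---and then invoke the harmonic Nadirashvili lower bound; but this is strictly more delicate than simply invoking Theorem \ref{main results}, so the direct reduction is the route I would take.
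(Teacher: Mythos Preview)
Your reduction to Theorem~\ref{main results} is correct: a weak solution of $\mathrm{div}(A\nabla u)=0$ with uniformly elliptic $C^1$ coefficients is H\"older continuous and obeys the strong maximum principle, so Theorem~\ref{main results} applies (on $B_r$, $r<1$, then let $r\uparrow 1$) and gives $\mathcal{H}^1(\{u=0\}\cap B_1)\geq 2$. This is a legitimate proof, but it is not the paper's route.

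The paper proves Theorem~\ref{general conclusion} entirely within Section~3, \emph{before} Theorem~\ref{main results} is available; as the introduction says, the Section~3 argument is what motivates Theorem~\ref{main results}. The paper first establishes a weakened version whose constant depends on both $\lambda$ and the Lipschitz norm $M$ of the coefficients, via a Harnack-type inequality (Lemma~\ref{Harnack type}), a compactness lower bound on the doubling ratio (Lemma~\ref{doubling lower}), Schauder estimates, and a good-index counting argument in concentric annuli in the style of Lemma~\ref{Main Lemma}. To remove the $M$-dependence, the paper then blows up by the factor $M$: the rescaled coefficients $b_{ij}(x)=a_{ij}(x/M)$ lie in $A(\lambda,1)$, one finds a zero on each sphere $\partial B_i$ ($i=1,\dots,M$) via the maximum principle, applies the $(\lambda,1)$ version in unit-scale balls around these zeros, and sums to get $\mathcal{H}^1(\{v=0\}\cap B_M)\geq C(\lambda)M$, which scales back to $C(\lambda)$ in $B_1$. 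This blow-up is exactly the ``mechanism to unify the local constants'' that you flagged as missing in a direct cube-covering attempt.

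What each approach buys: your route is shorter and yields the sharp constant $2$, but off-loads all content onto the measure-theoretic argument of Section~4; the paper's route stays within classical PDE tools and is logically prior in the exposition, so that Theorem~\ref{main results} can be presented as a generalization inspired by it.
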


This theorem implies that the lower bound is independent of Lipschitz constant of coefficients. To prove this theorem, we need two classic results and two important lemmas:

\begin{theorem}[Harnack inequality]
Assume that $A$ and $u$ satisfies the assumption in Theorem \ref{general conclusion}. There exists a constant C depending only on $\lambda$ such that if $u\geq 0 \ in\,\,B_1$ then
\begin{equation}
\underset{B_{1/2}}{\sup}u\leq C\underset{B_{1/2}}{\inf}u
\end{equation}
\end{theorem}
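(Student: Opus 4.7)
The plan is to invoke the classical Moser iteration scheme, which yields this Harnack inequality under the weaker hypothesis that $A$ is merely bounded measurable and uniformly elliptic; the $C^1$ regularity assumed in Theorem \ref{general conclusion} is not actually needed here. The argument splits naturally into a local $L^\infty$ bound (the easy direction) and a weak Harnack inequality (the hard direction), bridged by Moser's logarithmic trick and the John--Nirenberg inequality.

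First I would establish local boundedness. Working with $u+\delta$ for $\delta>0$ and letting $\delta\to 0$ at the end to avoid degeneracy where $u$ vanishes, test the weak form of $\mathrm{div}(A\nabla u)=0$ against $\eta^2 u^{2\beta+1}$ with $\beta\geq 0$ and $\eta$ a smooth cutoff supported in a slightly larger ball. Uniform ellipticity yields a Caccioppoli-type inequality controlling $\int|\nabla(\eta u^{\beta+1})|^2$ by $\int|\nabla\eta|^2 u^{2\beta+2}$; the Sobolev embedding then upgrades an $L^p$-bound into an $L^{\chi p}$-bound for some $\chi>1$. Iterating over radii $r_k=\tfrac12+2^{-k-1}$ and exponents $p_k=\chi^k p$ produces
\begin{equation*}
    \sup_{B_{1/2}} u \leq C(\lambda,p)\Bigl(\fint_{B_1} u^p\Bigr)^{1/p}
\end{equation*}
for every $p>0$.

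Second I would establish the weak Harnack inequality: for some small $p_0>0$,
\begin{equation*}
    \Bigl(\fint_{B_{3/4}} u^{p_0}\Bigr)^{1/p_0} \leq C(\lambda) \inf_{B_{1/2}} u.
\end{equation*}
Testing against $\eta^2 u^{2\beta+1}$ for negative exponents $\beta<-1/2$ runs an analogous iteration that controls $\inf_{B_{1/2}} u$ from below by a negative-exponent integral of $u$ on a slightly larger ball. To bridge positive and negative exponents, apply Moser's log trick: testing the equation against $\eta^2/u$ produces a Caccioppoli inequality for $w:=\log u$, and combined with Poincar\'e's inequality this shows $w\in\mathrm{BMO}(B_{3/4})$ with norm controlled by $\lambda$ alone. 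The John--Nirenberg inequality then furnishes a small $p_0>0$ with $\fint_{B_{3/4}} u^{p_0}\cdot\fint_{B_{3/4}} u^{-p_0}\leq C$, linking the two iterations.

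Combining the local $L^\infty$ bound with the weak Harnack inequality yields $\sup_{B_{1/2}} u \leq C(\lambda)\inf_{B_{1/2}} u$, as required. The main technical obstacle is the John--Nirenberg step: the Caccioppoli estimate for $\log u$ must be handled with care near the zero set of $u$, and it is the passage through $\mathrm{BMO}$ that truly drives the weak Harnack inequality. Everything else reduces to a mechanical iteration; because the statement is in dimension two, the dimensional dependence is absorbed into the universal constant and only the ellipticity constant $\lambda$ survives.
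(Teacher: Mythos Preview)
Your proposal outlines the standard Moser iteration proof of the De Giorgi--Nash--Moser Harnack inequality, and as a sketch it is correct. However, the paper does not actually prove this statement: it is listed among the ``two classic results'' that are invoked without proof, alongside the Schauder estimate. So there is no proof in the paper to compare against; the authors simply cite the Harnack inequality as a known black box and use it to derive Lemma~\ref{Harnack type}.

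Since you have supplied an argument where the paper gives none, your approach is strictly more detailed. The Moser route you describe is the canonical one and indeed requires only measurable bounded coefficients with ellipticity constant $\lambda$, so the dependence of $C$ on $\lambda$ alone is justified. One minor remark: your sketch is fine as a roadmap, but the phrase ``the main technical obstacle is the John--Nirenberg step'' slightly undersells the work in the negative-exponent iteration, where one must also check that the Caccioppoli inequality survives for $\beta\in(-1,-1/2)$ (the range that the log trick does not directly cover). This is routine but worth flagging if you intend the sketch to be self-contained.
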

Then we prove the following Harnack-type inequality:

\begin{lemma}\label{Harnack type}
Assume $A$ and $u$ satisfy the assumption in Theorem \ref{Harnack}, then there exists a constant $C$ denpending only on $\lambda$ such that if $u(0)\geq0$, then
\begin{equation}
    \underset{B_{1/2}}{\sup}u\geq C\underset{B_{2/5}}{\sup}|u|
\end{equation}
\end{lemma}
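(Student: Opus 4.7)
The plan is to bootstrap the one-sided Harnack inequality into the required two-sided estimate by passing to the translated function $v := M - u$, where $M := \sup_{B_{1/2}} u$. First I handle the degenerate case $M = 0$: this would force $u \leq 0$ on $B_{1/2}$, and combined with $u(0) \geq 0$ it would mean $u$ attains its maximum at the interior point $0$. The strong maximum principle (valid because $\mathrm{div}(A\nabla\cdot)$ has no zeroth-order term) then yields $u \equiv 0$ on $B_{1/2}$, so the inequality is vacuous. Henceforth assume $M > 0$. Then $v$ is nonnegative on $B_{1/2}$, satisfies $\mathrm{div}(A \nabla v) = 0$ (constants are solutions of the equation), and $v(0) = M - u(0) \leq M$.

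Next I would propagate the single-point bound $v(0) \leq M$ to a uniform bound $v \leq C M$ on $B_{2/5}$ via a Harnack chain. For any $y \in \overline{B_{2/5}}$, the ball $B_{1/10}(y)$ is contained in $B_{1/2}$, so after rescaling and translating the given Harnack inequality we obtain
\[
\sup_{B_{1/20}(y)} v \leq C_0 \inf_{B_{1/20}(y)} v,
\]
with $C_0 = C_0(\lambda)$. Given any $x \in B_{2/5}$, I connect $0$ to $x$ by a chain of at most $N_0$ such balls whose consecutive centers lie within distance $1/20$ of each other; since $|x| < 2/5$, a universal integer $N_0$ suffices. Iterating Harnack along the chain gives $v(x) \leq C_0^{N_0} v(0) \leq C M$, hence $\sup_{B_{2/5}} v \leq C M$. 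Since $u = M - v$ with $0 \leq v \leq C M$ on $B_{2/5}$, we conclude $-(C-1) M \leq u \leq M$ on $B_{2/5}$, so $\sup_{B_{2/5}} |u| \leq C M = C \sup_{B_{1/2}} u$, which is the claim after renaming the constant.

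I do not foresee any serious obstacle. The only genuinely subtle point is the degenerate case $M = 0$, which forces us to invoke the strong maximum principle; the remainder is a routine Harnack-chain argument whose constants depend only on the ellipticity $\lambda$. The linearity of $\mathrm{div}(A \nabla \cdot)$ is essential for $v = M - u$ to solve the same PDE as $u$, but this is clearly available in our setting, so the proof should go through smoothly.
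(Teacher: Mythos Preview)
Your proof is correct and follows essentially the same route as the paper: translate to $v = \sup_{B_{1/2}} u - u \ge 0$ and apply Harnack to $v$. The only cosmetic differences are that the paper applies Harnack once directly on $B_{2/5}\subset B_{1/2}$ and then does a short case split on whether $\sup_{B_{2/5}}|u|$ equals $\sup_{B_{2/5}}u$ or $-\inf_{B_{2/5}}u$, whereas you run a Harnack chain from the point bound $v(0)\le M$; also, your separate treatment of the case $M=0$ via the strong maximum principle is unnecessary, since Harnack already forces $v\equiv 0$ on $B_{2/5}$ when $v(0)=0$.
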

\begin{proof}
Let $v:=\underset{B_{1/2}}{\sup}u-u$, then $v\geq 0$ in $B_{1/2}$. By Harnack inequality, we know that
\begin{equation}
    \underset{B_{2/5}}{\sup}v\leq C(\lambda)\underset{B_{2/5}}{\inf}v
\end{equation}
Hence, we have
\begin{equation}
    C\left( \lambda \right) \underset{B_{2/5}}{\sup}u-\underset{B_{2/5}}{\inf}u\leq \left( C\left( \lambda \right) -1 \right) \underset{B_{1/2}}{\sup}u
\end{equation}
If $\underset{B_{2/5}}{\sup}|u|=-\underset{B_{2/5}}{\inf}u$. By the fact that $u(0)\geq 0$, we know that $\underset{B_{2/5}}{\sup}u\geq0$. So we can get the following inequality
\begin{equation}
    \underset{B_{2/5}}{\sup}|u|\leq C\left( \lambda \right) \underset{B_{2/5}}{\sup}u-\underset{B_{2/5}}{\inf}u\leq \left( C\left( \lambda \right) -1 \right) \underset{B_{1/2}}{\sup}u
\end{equation}
If $\underset{B_{2/5}}{\sup}|u|=\underset{B_{2/5}}{\sup}u$, then
\begin{equation}
    \underset{B_{2/5}}{\sup}|u|=\underset{B_{2/5}}{\sup}u\leq\underset{B_{1/2}}{\sup}u
\end{equation}
This completes our proof.

\end{proof}

For convenience, in the following argument, we will use the following notation:
\begin{definition}
    We say a symmetric matrix $A=(a_{ij})$ belongs to ${A} \left( \lambda ,M \right) $, if 
    \begin{equation}
        \begin{cases}
	\lambda ^{-1}|\xi |^2\leq a_{ij}\xi _i\xi _j\leq \lambda |\xi |^2 \,\,\,\,\forall \xi \in \mathbb{R}^2 \\
	Lip\left( a_{ij} \right) \leq M\\
\end{cases}
    \end{equation}
\end{definition}
We need the following classical Schauder estimate:
\begin{theorem}[Schauder estimate]
If $A\in {A} \left( \lambda ,M \right) $ and $div\left( A\nabla u \right) =0\ in\ B_1$. Then there exists constants $C$ and $\alpha$ depending only on $\lambda,M$ such that
 \begin{equation}
    \left\| u \right\| _{C^{1,\alpha}\left( B_{3/4} \right)}\leq C\underset{B_{4/5}}{\sup}|u|
 \end{equation}   
\end{theorem}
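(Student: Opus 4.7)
The plan is to prove this classical Schauder-type estimate by the Campanato perturbation (coefficient-freezing) argument. The target is the Morrey--Campanato decay for the gradient,
\begin{equation*}
\int_{B_\rho(x_0)}\bigl|\nabla u-(\nabla u)_{B_\rho(x_0)}\bigr|^2\,dx\leq C\rho^{n+2\alpha}
\end{equation*}
for every $x_0\in B_{3/4}$ and all small $\rho$, from which Campanato's characterization of $C^{1,\alpha}$ delivers the stated norm bound (here $n=2$).

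For a ball $B_r(x_0)\subset B_{4/5}$, freeze the coefficients at $x_0$ and let $v$ solve $\mathrm{div}(A(x_0)\nabla v)=0$ in $B_r(x_0)$ with $v=u$ on the boundary. Because $A(x_0)$ is a constant symmetric positive-definite matrix, a linear change of variables turns $v$ into a harmonic function on an ellipsoid, so the interior estimates for harmonic functions give the scale-invariant decay
\begin{equation*}
\int_{B_\rho(x_0)}\bigl|\nabla v-(\nabla v)_{B_\rho}\bigr|^2\leq C(\rho/r)^{n+2}\int_{B_r(x_0)}\bigl|\nabla v-(\nabla v)_{B_r}\bigr|^2
\end{equation*}
for all $\rho\leq r/2$. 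The error $w:=u-v$ vanishes on $\partial B_r(x_0)$ and solves $\mathrm{div}(A(x_0)\nabla w)=\mathrm{div}\bigl((A(x_0)-A(x))\nabla u\bigr)$; testing against $w$, using uniform ellipticity of $A(x_0)$ and the Lipschitz bound $|A(x)-A(x_0)|\leq Mr$ on $B_r(x_0)$, yields $\int_{B_r(x_0)}|\nabla w|^2\leq CM^2r^2\int_{B_r(x_0)}|\nabla u|^2$. Combining the two through $\nabla u=\nabla v+\nabla w$ produces the hybrid inequality
\begin{equation*}
\int_{B_\rho(x_0)}|\nabla u-(\nabla u)_{B_\rho}|^2\leq C\bigl[(\rho/r)^{n+2}+M^2r^2\bigr]\int_{B_r(x_0)}|\nabla u|^2.
\end{equation*}

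Iterating this inequality along a geometric sequence of radii and invoking the standard Campanato iteration lemma produces the Morrey decay with exponent $2\alpha$ for any $\alpha<1$, with constants depending only on $\lambda$ and $M$. The main obstacle lies exactly in this balancing: the favorable factor $(\rho/r)^{n+2}$ from the constant-coefficient decay must dominate the perturbation error $M^2r^2$ at every scale, which forces one to start at a base radius small relative to $M^{-1}$ and to choose the ratio $\rho/r$ tied to the target exponent $\alpha$. Apart from this balancing, the only remaining input is a baseline energy bound $\int_{B_{4/5}}|\nabla u|^2\leq C\sup_{B_{4/5}}|u|^2$, which is supplied by Caccioppoli's inequality, and then Campanato's embedding converts the gradient decay into the $C^{1,\alpha}$ bound claimed in the theorem.
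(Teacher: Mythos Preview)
The paper does not prove this theorem at all: it is stated as a ``classical Schauder estimate'' and used as a black box. Your Campanato/coefficient-freezing sketch is the standard textbook route to such an estimate for divergence-form operators with Lipschitz coefficients (as in Giaquinta or Han--Lin), and the outline is sound.

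Two small points to tighten. First, the hybrid inequality you wrote,
\[
\int_{B_\rho}|\nabla u-(\nabla u)_{B_\rho}|^2\leq C\bigl[(\rho/r)^{n+2}+M^2r^2\bigr]\int_{B_r}|\nabla u|^2,
\]
is not in a self-iterating form: the left side carries the oscillation of $\nabla u$, the right side the full energy. The usual fix is a two-step bootstrap: first run the freezing argument on $\psi(r)=\int_{B_r}|\nabla u|^2$ to obtain the Morrey decay $\psi(r)\leq C r^{n}$ (this is where the balancing you describe is actually used), and only then feed that into the oscillation inequality to get $\phi(\rho)\leq C(\rho/r)^{n+2}\phi(r)+CM^{2}r^{n+2}$, which now iterates cleanly to $\phi(\rho)\leq C\rho^{n+2\alpha}$. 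Second, Caccioppoli as stated gives $\int_{B_r}|\nabla u|^2\leq C\sup_{B_{4/5}}|u|^2$ for $r<4/5$, not for $r=4/5$; since you only need the estimate on $B_{3/4}$ this is harmless, but the radii should be nested accordingly. With these adjustments your argument is complete.
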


Then we will prove the following doubling inequality:
\begin{lemma} \label{doubling lower}
Assume $A\in {A} \left( \lambda ,M \right) $, $\mathrm{div}\left( A\nabla u \right) =0  \ in\ B_1$ ,$u(0)=0$ and $u$ is not zero. There exists a constant $C>1$ depending only on $\lambda,M$ such that 
\begin{equation}
    \frac{\underset{B_{4/5}}{\sup}|u|}{\underset{B_{2/5}}{\sup}|u|}\geq C
\end{equation}
\end{lemma}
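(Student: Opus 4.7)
The plan is a contradiction and blow-up argument, whose punchline is an application of the strong maximum principle to a limit solution. Suppose the lemma fails; then there exist sequences $A_k\in A(\lambda,M)$ and nontrivial solutions $u_k$ of $\mathrm{div}(A_k\nabla u_k)=0$ in $B_1$ with $u_k(0)=0$ and
\[
\frac{\sup_{B_{4/5}}|u_k|}{\sup_{B_{2/5}}|u_k|}\to 1 \quad \text{as } k\to\infty.
\]
After normalizing so that $\sup_{B_{4/5}}|u_k|=1$, we obtain $\sup_{B_{2/5}}|u_k|\to 1$.

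The first step is to extract a blow-up limit. The matrices $A_k$ are uniformly elliptic with constant $\lambda$ and uniformly Lipschitz with constant $M$, so by Arzelà-Ascoli a subsequence converges uniformly on $\overline{B_1}$ to some $A\in A(\lambda,M)$. Since $\|u_k\|_{L^\infty(B_{4/5})}=1$, the Schauder estimate stated above gives $\|u_k\|_{C^{1,\alpha}(B_{3/4})}\le C(\lambda,M)$. Passing to a further subsequence, I would obtain $u_k\to u$ in $C^1_{\mathrm{loc}}(B_{3/4})$, where the limit satisfies $\mathrm{div}(A\nabla u)=0$ in $B_{3/4}$, $u(0)=0$, and $|u|\le 1$ on $B_{3/4}$.

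Because $\overline{B_{2/5}}$ is a compact subset of $B_{3/4}$, uniform convergence yields $\sup_{\overline{B_{2/5}}}|u|=\lim_k\sup_{\overline{B_{2/5}}}|u_k|=1$. Hence $|u|$ attains the value $1$ at some $x_0\in\overline{B_{2/5}}\subset B_{3/4}$; after possibly replacing $u$ with $-u$, we may assume $u(x_0)=1$, which, combined with the bound $|u|\le 1$ throughout $B_{3/4}$, makes $x_0$ an interior maximum point. The strong maximum principle for uniformly elliptic equations with continuous coefficients then forces $u\equiv 1$ on the connected open set $B_{3/4}$, contradicting $u(0)=0$. This contradiction yields the desired strict constant $C>1$.

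The principal technical point is the compactness in the second step: checking that the Schauder constant depends only on $\lambda$ and $M$ so that uniform $C^{1,\alpha}$ bounds hold along the sequence, and verifying that the Lipschitz bound on $A_k$ is preserved in the uniform limit so that $A\in A(\lambda,M)$ still enjoys enough regularity for the strong maximum principle to apply. Both facts are standard once spelled out, and once the compactness is in place, the remainder of the argument is a textbook application of the strong maximum principle.
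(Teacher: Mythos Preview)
Your argument is correct and follows essentially the same path as the paper's proof: a contradiction/compactness argument using Arzel\`a--Ascoli for the coefficients, the Schauder estimate for uniform $C^{1,\alpha}$ bounds on the normalized solutions, and then the strong maximum principle applied to the limit to derive a contradiction with $u(0)=0$. The only cosmetic difference is your normalization $\sup_{B_{4/5}}|u_k|=1$ versus the paper's $\sup_{B_{2/5}}|u_k|=1$, which is immaterial.
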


\begin{proof}
We argue by contradiction. If this lemma does not holds, then we can find $\left\{ A_k \right\} _{k=1}^{\infty}\subset A\left( \lambda ,M \right) , \left\{ u_k \right\} _{k=1}^{\infty}$ such that $\mathrm{div}\left( A_k\nabla u_k \right) =0\ in\ B_1, u_k\left( 0 \right) =0$, $u_k$ is not identically zero and
\begin{equation}
    \frac{\small{\underset{B_{4/5}}{\sup}|u_k|}}{\small{\underset{B_{2/5}}{\sup}|u_k|}}\leq 1+\small{\frac{1}{k}}
\end{equation}
By scaling, we can assume $\underset{B_{2/5}}{\sup}|u_k|=1$, hence $\underset{B_{4/5}}{\sup}|u_k|\leq 1+\frac{1}{k}$. By Schauder estimate and taking subsequence, we can assume that there exists $u^*\in C^{1,\alpha}\left( B_{3/4} \right)$ such that
\begin{equation}
    \underset{k\rightarrow \infty}{\lim}\left\| u_k-u^* \right\| _{C^{1,\alpha}\left( B_{3/4} \right)}=0
\end{equation}
And by taking subsequence, we also can assume that there exists $A^*\in A\left( \lambda ,M \right)$ such that 
\begin{equation}
    \underset{k\rightarrow \infty}{\lim}\left\| A_k-A^* \right\| _{C^0\left( B_{4/5} \right)}=0
\end{equation}
Hence we know $u^*$ is the weak solution of the equation $\mathrm{div}\left( A^*\nabla u \right) =0 \ in\,\,B_{3/4}$, and by our assumption, we know
\begin{equation}
    u^*\left( 0 \right) =0, \underset{B_{2/5}}{\sup}|u^*|=\underset{B_{3/4}}{\sup}|u^*|=1
\end{equation}
This contradicts to the maximum principle.
\end{proof}

With above lemmas, we can prove the weakened version of Theorem \ref{general conclusion}. We will later use this results to prove the stronger version:
\begin{theorem}
    Assume $A\in {A} \left( \lambda ,M \right) $, $\mathrm{div}\left( A\nabla u \right) =0  \ in\ B_1$ ,$u(0)=0$ and $u$ is not zero. Then there exists a constant $C$ depending only on $\lambda,M$ such that 
\begin{equation}
    \mathcal{H}^1( \left\{ u=0 \right\} \cap B_1 ) \geq C
\end{equation}
\end{theorem}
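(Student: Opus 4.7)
The plan is to argue by contradiction using compactness together with the two-dimensional intermediate value / projection argument used in Theorem~\ref{main results}. Suppose the conclusion fails: there exist $A_k\in A(\lambda,M)$ and solutions $u_k$ of $\mathrm{div}(A_k\nabla u_k)=0$ in $B_1$ with $u_k(0)=0$, $u_k\not\equiv 0$, and $\mathcal{H}^1(\{u_k=0\}\cap B_1)\to 0$.

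By the linearity of the equation I rescale each $u_k$ to a convenient normalization; the Schauder estimate stated above then yields a uniform $C^{1,\alpha}$-bound on interior balls, and the class $A(\lambda,M)$ is compact in $C^0(B_1)$ by the uniform Lipschitz control. After passing to a subsequence, $A_k\to A^*$ in $C^0$ and $u_k\to u^*$ in $C^1$ on an interior ball, with $\mathrm{div}(A^*\nabla u^*)=0$ and $u^*(0)=0$. The two key ingredients now come into play: Lemma~\ref{Harnack type} applied to $u_k$ and to $-u_k$ (both vanish at the origin) produces points $y_\pm^{(k)}\in B_{1/2}$ with $u_k(y_+^{(k)})\geq c_0\sup_{B_{2/5}}|u_k|$ and $u_k(y_-^{(k)})\leq -c_0\sup_{B_{2/5}}|u_k|$, while Lemma~\ref{doubling lower} (possibly after a blow-up rescaling to a good dyadic scale) forces $\sup_{B_{2/5}}|u_k|$ to be bounded below by a constant $c_1=c_1(\lambda,M)$. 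Passing to the limit, there exist $y_\pm^*\in\overline{B_{1/2}}$ with $u^*(y_+^*)\geq c>0>-c\geq u^*(y_-^*)$.

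Continuity of $u^*$ supplies a radius $\delta>0$ with $u^*>c/2$ on $B(y_+^*,\delta)$ and $u^*<-c/2$ on $B(y_-^*,\delta)$, and uniform convergence gives the same inequalities (with $c/4$) for $u_k$ for all sufficiently large $k$. I conclude by the projection argument: choosing coordinates so that $y_+^*-y_-^*$ lies along the first axis, every line $\{x_2=s\}\cap B_1$ with $|s|<\delta$ meets both balls, so the intermediate value theorem places a zero of $u_k$ on each such line; since orthogonal projection is $1$-Lipschitz, the $\mathcal{H}^1$-measure of $\{u_k=0\}\cap B_1$ is at least the length of the image interval, namely $2\delta$, contradicting $\mathcal{H}^1(\{u_k=0\}\cap B_1)\to 0$.

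I expect the main obstacle to lie in the compactness step: ensuring that the limit $u^*$ is nontrivial on a \emph{fixed} interior ball. The naive normalization $\sup_{B_{4/5}}|u_k|=1$ allows the mass of $u_k$ to concentrate in a thin shell near $\partial B_{4/5}$, which would leave $u^*\equiv 0$ in the interior. The cure is a blow-up normalization, for example rescaling each $u_k$ at the smallest radius on which $\sup_{B_r}|u_k|$ attains a fixed fraction of $\sup_{B_1}|u_k|$ and using Lemmas~\ref{Harnack type} and~\ref{doubling lower} together to control the sup-ratios across dyadic scales. A separate sub-case then has to be handled when this blow-up scale $r_k\to 0$: in that regime the rescaled coefficients $A_k(r_k\,\cdot)$ converge to a constant matrix, so the rescaled limit is essentially harmonic, and one expects to close the argument either by an iteration of the blow-up or by combining the projection estimate with the Nadirashvili-type content of Theorem~\ref{main results} to produce a scale-invariant lower bound. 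Executing this normalization cleanly is the principal technical difficulty.
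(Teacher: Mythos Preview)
Your approach is genuinely different from the paper's and, as written, has a real gap precisely where you suspect it.

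\medskip
\textbf{What the paper does.} The paper gives a \emph{direct, quantitative} argument rather than a compactness/contradiction one. It partitions the annulus $B_{7/10}\setminus B_{1/2}$ into $k\sim N$ concentric shells (where $N=\log_2\frac{\sup_{B_{4/5}}|u|}{\sup_{B_{2/5}}|u|}$), and introduces the notion of an index $i$ being $s^\pm$-good when $M_{i+1}\le sM_i$ (respectively $-m_{i+1}\le -sm_i$). A pigeonhole count using $M_k/M_0\le 2^{CN}$ shows that at least $k/3$ indices are simultaneously $s^+$- and $s^-$-good. On each such shell the doubling is under control by construction, so Schauder plus Lemma~\ref{Harnack type} produce balls of radius $\sim 1/N$ on which $u>0$ and $u<0$, yielding $\mathcal{H}^1(\{u=0\}\cap(B_{r_{i+1}}\setminus B_{r_i}))\ge C(\lambda,M)/N$. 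Summing over $k/3\sim N$ disjoint shells gives a constant lower bound independent of $N$.

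\medskip
\textbf{Where your argument breaks.} The compactness step really does fail, and none of the proposed rescues closes the gap:
\begin{itemize}
\item You invoke Lemma~\ref{doubling lower} to get a lower bound on $\sup_{B_{2/5}}|u_k|$, but that lemma gives the inequality in the \emph{opposite} direction: it bounds the ratio $\sup_{B_{4/5}}|u|/\sup_{B_{2/5}}|u|$ from \emph{below}, hence gives only an upper bound on the inner sup in terms of the outer one. There is no a priori upper bound on the doubling index here; for $u=\mathrm{Re}(z^N)$ the ratio is $2^N$ at \emph{every} scale.
\item The proposed blow-up (``rescale at the smallest radius where $\sup_{B_r}|u_k|$ is a fixed fraction of $\sup_{B_1}|u_k|$'') does not manufacture a scale with bounded doubling: in the same example the doubling is $2^N$ at every dyadic scale, so after any such rescaling you are back in exactly the same situation. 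The sub-case $r_k\to 0$ with constant-coefficient limit is no easier, since harmonic functions exhibit the identical obstruction.
\item Finally, even if you did extract a nontrivial limit $u^*$ on a fixed ball, the projection estimate only yields $\mathcal{H}^1\ge 2\delta$ at the \emph{blown-up} scale; scaling back multiplies by $r_k$, which may tend to $0$.
\end{itemize}
The missing ingredient is exactly what the paper supplies: a mechanism (the good-index count) that locates, for a single fixed $u$, \emph{many} scales with controlled doubling whose number grows like $N$, so that the $1/N$ contribution per scale sums to a constant. Without that idea, the compactness route is circular: producing a scale with bounded doubling is essentially equivalent to the theorem you are trying to prove.
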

\begin{proof}
Let $N:=\log _2\small{\frac{\underset{B_{4/5}}{\sup}|u|}{\underset{B_{2/5}}{\sup}|u|}}$. By Lemma \ref{doubling lower} , we can assume
\begin{equation}
    N\geq C_1=C_1\left( \lambda ,M \right)>0 
\end{equation}
Divide the interval $[\frac{1}{2},\frac{7}{10}]$ equally into
\begin{equation}
    \small{\frac{1}{2}=r_0<r_1<...<r_k=\small{\frac{7}{10}}}
\end{equation}
and define
\begin{equation}
    M_i=\underset{B_{r_i}}{\sup}u, \ m_i=\underset{B_{r_i}}{\inf}u,\  i=0,1,2...k
\end{equation}
WLOG, we assume that $\underset{B_{2/5}}{\sup}|u|=1$. Then by Lemma \ref{Harnack type} , we can assume
\begin{equation}
    M_i\geq C_{2}(\lambda), \ m_i\leq-C_{2}(\lambda),\ i=0,1,2,...k
\end{equation}
For a fix constant $s>1$, we define
\begin{equation}
    I_{s}^{+}:=\left\{ i, M_i\leq sM_{i+1} \right\} , I_{s}^{-}:=\left\{ i, -m_i\leq -sm_{i+1} \right\} 
\end{equation}
\begin{equation}
    k^+:=\#I_{s}^{+}, k^-:=\#I_{s}^{-}
\end{equation}
Then by Lemma \ref{Harnack type}
\begin{equation}
    s^{k-k^+}\leq \small{\frac{\underset{B_{7/10}}{\sup}u}{\underset{B_{1/2}}{\sup}u}}\leq \small{\frac{\underset{B_{4/5}}{\sup}|u|}{C_2\left( \lambda \right)}\leq 2^{C_3\left( \lambda ,M \right) N}}
\end{equation}
Hence, we obtain
\begin{equation}
    k^+\geq k-\left( \log _s2 \right) C_3\left( \lambda ,M \right) N
\end{equation}
Hence by Lemma \ref{doubling lower}, there exists a constant $s_0=s_0(\lambda,M)$ such that if $s\leq s_0$, then
\begin{equation}
    \left( \log _s2 \right) C_3\left( \lambda ,M \right) N>1000
\end{equation}
Take $s=s_0$ and $k=3\left( \log _{s_0}2 \right) C_3\left( \lambda ,M \right) N$. Then, we have the estimate of $k^+$
\begin{equation}
    k^+\geq \small{\frac{2}{3}k}
\end{equation}
With a similar argument, we also have the estimate of $k^-$
\begin{equation}
    k^-\geq \small{\frac{2}{3}k}
\end{equation}
Hence, the following inequality holds
\begin{equation}
    \#\left( I_{s}^{+}\cap I_{s}^{-} \right) \geq \small{\frac{1}{3}k}
\end{equation}
For any $i\in I_{s}^{+}\cap I_{s}^{-}$, we take $x_i,y_i\in \partial B_{r_i}$ such that
\begin{equation}
    u\left( x_i \right) =\underset{B_{r_i}}{\sup}\ u,\  u\left( y_i \right) =\underset{B_{r_i}}{\inf}\ u
\end{equation}
By scaling, Schauder estimate and Lemma \ref{doubling lower}, we know
\begin{equation}
    \underset{B\left( x_i,\small{\frac{C_4\left( \lambda ,M \right)}{N}} \right)}{\sup}|\nabla u|\leq C_5\left( \lambda ,M \right) \underset{B\left( x_i,\small{2\frac{C_4\left( \lambda ,M \right)}{N}} \right)}{\sup}|u|\leq C_6\left( \lambda ,M \right) \underset{B_{r_{i+1}}}{\sup}u\leq C_7\left( \lambda ,M \right) \underset{B_{r_i}}{\sup} u
\end{equation}
Thus, we can estimate $u$ as below
\begin{equation}
    u>0 \ in\,\,B\left( x_i,\small{\frac{C_8\left( \lambda ,M \right)}{N}} \right) , B\left( x_i,\small{\frac{C_8\left( \lambda ,M \right)}{N}} \right) \subset B_{r_{i+1}}\verb|\|B_{r_{i-1}}
\end{equation}
Also we know that
\begin{equation}
    u<0 \ in\,\,B\left( y_i,\small{\frac{C_8\left( \lambda ,M \right)}{N}} \right) , B\left( y_i,\small{\frac{C_8\left( \lambda ,M \right)}{N}} \right) \subset B_{r_{i+1}}\verb|\|B_{r_{i-1}}
\end{equation}
By the continuity of $u$, we know that
\begin{equation}
    \mathcal{H}^1(\left\{ u=0 \right\} \cap B_{r_{i+1}}\verb|\|B_{r_{i}} )\geq \frac{C_8\left( \lambda ,M \right)}{N}
\end{equation}
Finally, we have the estimate
\begin{equation}
    \mathcal{H}^1( \left\{ u=0 \right\} \cap B_{\frac{7}{10}}\verb|\|B_{\frac{1}{2}} ) \geq \frac{C_8\left( \lambda ,M \right)}{N}\small{\frac{k}{3}} \geq C_9\left( \lambda ,M \right)
\end{equation}
\end{proof}

Finally, we can prove the general conclusion:

\begin{proof}[Proof of Theorem \ref{general conclusion}]
WLOG, we assume that $Lip\left( a_{ij} \right) <\infty $, otherwise we can consider the equation on $B_{1/2}$. Assume $Lip\left( a_{ij} \right) <M $, $M$ is an interger and $\ M>1000 $, then we can blow up the equation as
\begin{equation}
    v\left( x \right) =u\left( \small{\frac{x}{M}} \right) , b_{ij}=a_{ij}\left( \small{\frac{x}{M}} \right) ,B=\left[ b_{ij} \right] 
\end{equation}
Then, we just need to consider the following equation
\begin{equation}
    \mathrm{div}\left( B\nabla v \right) =0 \ in \ B_M, B\in A\left( \lambda ,1 \right)
\end{equation}
By maximum principle and mean value theorem we can assume
\begin{equation}
    z_i\in \partial B_i, v\left( z_i \right) =0, i=1,2,3,...,M
\end{equation}
Then by Lemma \ref{doubling lower}, we have the estimate
\begin{equation}
   \mathcal{H}^1(\left\{ v=0 \right\} \cap B\left( z_i,1/2 \right) ) \geq C\left( \lambda \right) 
\end{equation}
Add them up, we can get
\begin{equation}
    \mathcal{H}^1( \left\{ v=0 \right\} \cap B_M ) \geq C\left( \lambda \right) M
\end{equation}
After scaling, we can get our final result as below
\begin{equation}
    \mathcal{H}^1( \left\{ u=0 \right\} \cap B_1 )\geq C\left( \lambda \right)
\end{equation}
\end{proof}

Also, by similar argument of Lemma \ref{Main Lemma}, we can prove the following theorem:
\begin{theorem}
    Assume that $A\in {A} \left( \lambda ,M \right) $, $\mathrm{div}\left( A\nabla u_0 \right) =0  \ in\ B_1$ ,$u_0(0)=0$ and $u_0$ is not zero. The function $u$ is continuous. There exists two constant $\varepsilon,C>0$ depending only on $\lambda,M$ such that if 
    \begin{equation}
        \sup_{B(0,1)}|u-u_{0}|\leq \varepsilon\sup_{B(0,1/2)}|u_{0}|
    \end{equation}
    then we have 
    \begin{equation}
        \mathcal{H}^{1}(\{u=0\}\cap B(0,1))\geq C
    \end{equation}
\end{theorem}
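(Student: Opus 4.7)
The plan is to mirror the argument used in the preceding (weakened) version of Theorem \ref{general conclusion}, but to track pointwise sign information on balls of controlled size so that the $L^\infty$ perturbation hypothesis can be absorbed at the end. After scaling I first normalize so that $\sup_{B(0,1/2)}|u_0|=1$, which turns the hypothesis into $\|u-u_0\|_{L^\infty(B_1)}\leq \varepsilon$. Applying Lemma \ref{doubling lower} to $u_0$ yields a doubling index
$$N := \log_2 \frac{\sup_{B_{4/5}}|u_0|}{\sup_{B_{2/5}}|u_0|} \geq C_1(\lambda, M) > 0,$$
so $N$ is both bounded below and available as a scale for choosing the number of shells in the next step.

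Next, I partition $[1/2,7/10]$ into $k \asymp N$ equal subintervals with endpoints $r_0 < r_1 < \dots < r_k$, and set $M_i := \sup_{B_{r_i}} u_0$, $m_i := \inf_{B_{r_i}} u_0$. The Harnack-type inequality of Lemma \ref{Harnack type}, applied shell by shell, gives the uniform lower bounds $M_i \geq C_2(\lambda)$ and $-m_i \geq C_2(\lambda)$. Defining the $s_0$-good sets $I_{s_0}^\pm$ exactly as in the proof of the unperturbed Theorem \ref{general conclusion}, the same telescoping-plus-counting argument (with $s_0 = s_0(\lambda,M)$ chosen small enough via Lemma \ref{doubling lower}) delivers $\#(I_{s_0}^+ \cap I_{s_0}^-) \geq k/3$.

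For each $i$ in this intersection I pick $x_i, y_i \in \partial B_{r_i}$ where $u_0$ attains its supremum and infimum, respectively. Combining the Schauder estimate with Lemma \ref{Harnack type} and the $s_0$-goodness of $i$ bounds $\sup_{B(x_i,C_4/N)} |\nabla u_0| \leq C_7 M_i$, and the mean-value theorem then yields
$$u_0 \geq \tfrac{1}{2} C_2(\lambda) \text{ on } B(x_i,C_8/N), \qquad u_0 \leq -\tfrac{1}{2} C_2(\lambda) \text{ on } B(y_i,C_8/N),$$
with both balls contained in $B_{r_{i+1}}\setminus B_{r_{i-1}}$. Choosing $\varepsilon := \tfrac14 C_2(\lambda)$, which depends only on $\lambda$, the triangle inequality forces $u>0$ on $B(x_i,C_8/N)$ and $u<0$ on $B(y_i,C_8/N)$. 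A projection/Fubini argument on the continuous function $u$ (take the direction $x_i-y_i$ and apply the Intermediate Value Theorem fiberwise over the orthogonal line) then gives $\mathcal{H}^1(\{u=0\}\cap (B_{r_{i+1}}\setminus B_{r_{i-1}})) \geq c\, C_8/N$. Summing over $\geq k/3 \sim N$ disjoint shells yields $\mathcal{H}^1(\{u=0\}\cap B_1) \geq C_9(\lambda,M)$, as desired.

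The main obstacle is respecting the correct order of quantifiers: the threshold $\varepsilon$ must be fixed depending only on $C_2(\lambda)$ \emph{before} $N$ is revealed, since $N = N(u_0)$ can be arbitrarily large. All $N$-dependence is therefore pushed into the radius $C_8/N$ of the positive and negative balls and is consumed by the summation over the $\sim N$ good shells. A secondary subtlety is that $u$ is only assumed continuous, so the nodal lower bound on each shell cannot appeal to rectifiability or PDE-based stratification; instead we rely on the elementary projection/Fubini argument available for continuous sign-changing functions, precisely the mechanism used at the end of Lemma \ref{Main Lemma}.
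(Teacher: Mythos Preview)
Your approach is exactly the one the paper intends: it says ``by similar argument of Lemma \ref{Main Lemma}'', and what you have written is precisely that argument transplanted into the setting of Theorem 3.8 (general elliptic $A\in A(\lambda,M)$, dimension two), with the additional observation that the quantitative positivity threshold $\tfrac12 C_2(\lambda)$ on the good balls is independent of $N$, so a single choice $\varepsilon=\tfrac14 C_2(\lambda)$ suffices for all $u_0$. That is the whole point, and you have it.

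One step does not quite work as written. In your per-shell nodal estimate you project in the linear direction $x_i-y_i$ and apply the intermediate value theorem on the fibers. This does produce, for each fiber meeting both balls, a zero of $u$; but that zero lies somewhere on the chord joining $B(x_i,C_8/N)$ to $B(y_i,C_8/N)$, and this chord is \emph{not} contained in the annulus $B_{r_{i+1}}\setminus B_{r_{i-1}}$ (when $x_i,y_i$ are nearly antipodal it passes close to the origin). So your displayed inequality $\mathcal H^1(\{u=0\}\cap(B_{r_{i+1}}\setminus B_{r_{i-1}}))\ge cC_8/N$ is not established by that argument, and without localization to disjoint annuli the summation over good indices cannot be justified (the chord-tubes for different $i$ may all overlap near $0$). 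The fix is the one implicit in the paper's proof of Theorem 3.8: use the \emph{radial} projection $x\mapsto|x|$ instead. For every $r\in(r_i-C_8/N,\,r_i+C_8/N)$ the circle $\partial B_r$ meets both $B(x_i,C_8/N)$ (where $u>0$) and $B(y_i,C_8/N)$ (where $u<0$), so by the intermediate value theorem on the connected curve $\partial B_r$ there is a zero of $u$ on that circle; since $x\mapsto|x|$ is $1$-Lipschitz, this yields $\mathcal H^1(\{u=0\}\cap(B_{r_i+C_8/N}\setminus B_{r_i-C_8/N}))\ge 2C_8/N$, and now the relevant annuli are genuinely disjoint for distinct good indices (after possibly shrinking $C_8$ so that $C_8/N$ is smaller than the shell width $1/(5k)$). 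A smaller cosmetic point: with your normalization $\sup_{B_{1/2}}|u_0|=1$ and $r_0=1/2$, Lemma \ref{Harnack type} as stated compares $\sup_{B_{1/2}}u_0$ to $\sup_{B_{2/5}}|u_0|$, not to $\sup_{B_{1/2}}|u_0|$; either shift the shell interval slightly inward (e.g.\ start at $r_0>1/2$) or invoke the obvious rescaled version of Lemma \ref{Harnack type} so that $M_0\ge C_2(\lambda)$ follows directly.
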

It shows that the nodal volume still has lower bound under small perturbation. But the quantitative lower bound is still unknow.

\section{Special Case in SMP function}

In this section, we will derive the lower bound in a different way and all the functions are defined in $\mathbb{R}^2$. It only needs the basic definition of Hausdorff measure. Also, it only requires the function to be continuous and has strong maximum principle. We also hope this argument will work for higher dimension. For convenience, we will use the following notation:

\begin{definition}
    Let
    \begin{equation}
    SMP\left( B_1 \right) :=\left\{ u\in C^0\left( \overline{B_1} \right) :\text{for    }\forall \varOmega \subseteq B_1,\varOmega \text{ open},u|_{\varOmega}
\text{ cannot take maximum or minimum } in\,\,\varOmega \right\}   
    \end{equation}
    And we say $u$ is SMP function if $u\in SMP\left( B_1 \right) $.
\end{definition}

\begin{proof}[Proof of Theorem \ref{main results}]
    If we take $u(x_1,x_2)=x_1$, we know that
    \begin{equation}
        \underset{\begin{array}{c}
	u\in SMP\left( B_1 \right)\\
	u\left( 0 \right) =0\\
\end{array}}{inf}\mathcal{S}^1( \left\{ u=0 \right\} \cap \overline{B_1} )\leq 2
    \end{equation}
We only need to prove that if $u\in SMP\left( B_1 \right) ,u\left( 0 \right) =0$, then
\begin{equation}
    \mathcal{S}^1(\left\{ u=0 \right\} \cap \overline{B_1} ) \geq 2
\end{equation}
To prove this, we introduce the following set
\begin{equation}
    I:=\left\{ s\in \left[ 0,1 \right] ,\mathcal{S}^1( \left\{ u=0 \right\} \cap T_{s,1} )\geq 2\left( 1-s \right) \right\} 
\end{equation}
where $T_{s,1}=\left\{ s\leq|x|\leq1 \right\} $. WLOG, we assume that $\mathcal{S}^1 (\left\{ u=0 \right\} \cap \overline{B_1} ) <\infty $. Otherwise, there is nothing to prove. We will prove the final result with the following two claims.

Claim 1: $I\ne \emptyset$ and $I$ is closed.
\begin{proof}[Proof of Claim 1]
It is obvious that $1\in I$, so $I\ne \emptyset $. Take any sequence $\left\{ s_n \right\} _{n=1}\in I$ and$\underset{n\rightarrow \infty}{\lim}s_n=s$. Then there are two cases, the first case is that there exists a subsequence $\left\{ s_{n_k} \right\} _{k=1}$ monotonically increasing and converging to $s$. The second case is that there exists a subsequence $\left\{ s_{n_k} \right\} _{k=1}$ monotonically decreasing and converging to $s$.

For the first case ,by $\mathcal{S}^1( \left\{ u=0 \right\} \cap \overline{B_1} ) <\infty $, we know that
\begin{equation}
    \mathcal{S}^1(\left\{ u=0 \right\} \cap T_{s,1} ) =\underset{k\rightarrow \infty}{\lim}\mathcal{S}^1(\left\{ u=0 \right\} \cap T_{s_{n_k},1} ) \geq 2\left( 1-s \right) 
\end{equation}
For the second case, similarly, we have
\begin{equation}
     \mathcal{S}^1( \left\{ u=0 \right\} \cap T_{s,1} ) \geq \mathcal{S}^1(\left\{ u=0 \right\} \cap T_{s_{n_k},1} ) \geq 2\left( 1-s_{n_k} \right) 
\end{equation}
Thus, we obtain
\begin{equation}
    \mathcal{S}^1(\left\{ u=0 \right\} \cap T_{s,1} ) \geq \underset{k\rightarrow \infty}{\lim}2\left( 1-s_{n_k} \right)=2(1-s) 
\end{equation}
This finishes the proof of Claim 1.
\end{proof}

Claim 2: If $s\in I$ and $s>0$, then there exists a constant $\delta_s>0$ such that 
\begin{equation}
    [s-\delta_s,s]\subset I
\end{equation}
\begin{proof}[Proof of Claim 2]
By definition of SMP function, we can assume
\begin{equation}
    x_s,y_s\in \partial B_s\,\,u\left( x_s \right) =\underset{B_s}{\max}u>0,u\left( y_s \right) =\underset{B_s}{\min}u<0
\end{equation}
By continuity of $u$, we can assume that there exists $r_s>0$ such that  
\begin{equation}
    \,\,\begin{cases}
	u\left( x \right) >0,x\in B\left( x_s,r_s \right)\\
	u\left( x \right) <0,x\in B\left( y_s,r_s \right)\\
\end{cases}
\end{equation}
We will give the proof in polar coordinates. Let $x_s=\left( s,\theta _1 \right) ,y_s=\left( s,\theta _2 \right),0\leq \theta_1<\theta_2<2\pi $. We will consider two sets as below
\begin{equation}
   \begin{cases}
	Z_1:= \left\{ u=0 \right\} \cap \left\{ s-\frac{r_s}{2}\leq r<s,\theta _1\leq \theta \leq \theta _2 \right\} \\
	Z_2:= \left\{ u=0 \right\} \cap \left\{ s-\frac{r_s}{2}\leq r<s,\theta _2\leq \theta \leq 2\pi \,\,or\,\,0\leq \theta \leq \theta _1 \right\} \\
\end{cases} 
\end{equation}
And we consider the following two line segment 
\begin{equation}
    \begin{cases}
	l_1:=\left\{ s-\frac{r_s}{2}\leq r<s,\theta =\frac{\theta _1+\theta _2}{2} \right\}\\
	l_2:=\left\{ s-\frac{r_s}{2}\leq r<s,\theta =\frac{\theta _1+\theta _2}{2}+\pi \right\}\\
\end{cases}
\end{equation}
We consider any sets of balls $\left\{ B_{i}^{1}=B\left( x_{i}^{1},r_{i}^{1} \right) \right\} _{i=1}^{\infty},\left\{ B_{i}^{2}=B\left( x_{i}^{2},r_{i}^{2} \right) \right\} _{i=1}^{\infty}$ such that 
\begin{equation}
    \begin{cases}
	Z_1\subseteq \underset{i=1}{\overset{\infty}{\cup}}B_{i}^{1},Z_1\cap B_{i}^{1}\ne \emptyset ,r_{i}^{1}\leq \frac{r_s}{10},\forall i=1,2,...\\
	Z_2\subseteq \underset{i=1}{\overset{\infty}{\cup}}B_{i}^{2},Z_2\cap B_{i}^{2}\ne \emptyset ,r_{i}^{2}\leq \frac{r_s}{10},\forall i=1,2,...\\
\end{cases}
\end{equation}
Then by our construction, we know that
\begin{equation}
    B_{i}^{1}\cap B_{j}^{2}=\emptyset ,\forall i,j=1,2,...
\end{equation}
We can rotate $\{x_{i}^{1}\}$ along the circle $\left\{ r=|x_{i}^{1}| \right\}$ and also rotate $\{x_{i}^{2}\}$ along the circle $\left\{ r=|x_{i}^{2}| \right\}$ to the points
\begin{equation}
    \begin{cases}
	\widetilde{x_{i}^{1}}=\left( |x_{i}^{1}|,\small{\frac{\theta _1+\theta _2}{2}} \right)\\
	\widetilde{x_{i}^{2}}=\left( |x_{i}^{2}|,\small{\frac{\theta _1+\theta _2}{2}+\pi} \right)\\
\end{cases}
\end{equation}
Then by the property of continuous function, we know that for any $s-\small{\frac{r_s}{2}\leq t<s}$
\begin{equation}
    \left\{ u=0 \right\} \cap \left\{ r=t,\theta _1<\theta <\theta _2 \right\} \ne \emptyset
\end{equation}
If we assume that $z_t\in \left\{ u=0 \right\} \cap \left\{ r=t,\theta _1<\theta <\theta _2 \right\} $, then we can find $i$ such that
\begin{equation}
    z_t\in B_{i}^{1}
\end{equation}
So we know that
\begin{equation}
    \left( t,\small{\frac{\theta _1+\theta _2}{2}} \right) \in \widetilde{B_{i}^{1}}
\end{equation}
Hence, we have the following relation
\begin{equation}
    l_1\subseteq \underset{i=1}{\overset{\infty}{\cup}}B\left( \widetilde{x_{i}^{1}},r_{i}^{1} \right)
\end{equation}
Similarly, we get
\begin{equation}
	l_2\subseteq \underset{i=1}{\overset{\infty}{\cup}}B\left( \widetilde{x_{i}^{2}},r_{i}^{2} \right)
\end{equation}
By the defnition of Spherical measure, we know that
\begin{equation}
    \mathcal{S}^1(\left\{ u=0 \right\} \cap \left\{ s-\small{\frac{r_s}{2}}\leq r<s \right\} ) \geq \mathcal{S}^1( l_1\cup l_2 ) =r_s
\end{equation}
Therefore, we obtain
\begin{equation}
    \mathcal{S}^1( \left\{ u=0 \right\} \cap \left\{ s-\small{\frac{r_s}{2}}\leq r \leq1\right\} ) \geq 2(1-(s-\frac{r_s}{2}))
\end{equation}
This finishes the proof of Claim 2.
\end{proof}

By Claim 1 and Claim 2, we can prove $I=[0,1]$. Firstly, by Claim 2 we know that the interval $(0,1)\verb|\|I$ is open if $(0,1)\verb|\|I\ne \emptyset$. Then we assume that
\begin{equation}
    \left( 0,1 \right) \verb|\|I=\underset{i=1}{\overset{\infty}{\cup}}\left( a_i,b_i \right) 
\end{equation}
And we assume that $b_1>b_2>\cdots$. Then $b_1\in I$ since $1\in I$. By Claim 2, we can find $\delta _{b_i}>0$ such that
\begin{equation}
    \left[ b_i-\delta _{b_i},b_i \right] \subseteq I
\end{equation}
This leads to contradiction! So we know that
\begin{equation}
    \left( 0,1 \right) \subseteq I
\end{equation}
At the same time, by Claim 1, we know that
\begin{equation}
   I=[0,1]
\end{equation}
This ends the proof.
\end{proof}

\bibliographystyle{alpha}
\bibliography{reference}
{\small 
\indent (Jiahuan Li) SCHOOL OF MATHEMATICS SCIENCE, UNIVERSITY OF SCIENCE AND TECHNOLOGY OF CHINA, HEFEI, 230022, CHINA.\;
Email.address: jiahuan@mail.ustc.edu.cn\\ \\
(Zhichen Ying) SCHOOL OF MATHEMATICAL SCIENCES, ZHEJIANG UNIVERSITY, HANGZHOU, 310058, CHINA.\;
Email address: yingzc059@gmail.com
}
\end{document}